\newcommand{\Lip}{{\operatorname{Lip}}}
\newcommand{\C}{C}
\newcommand{\Linf}{L^\infty}
\newcommand{\diam}{\operatorname{diam}}
\newcommand{\dist}{\varrho}
\newcommand{\argmax}[1]{\underset{#1}{\operatorname{argmax} \,}}
\newcommand{\argmin}[1]{\underset{#1}{\operatorname{argmin} \,}}
\newcommand{\esssup}[1]{\underset{#1}{\operatorname{ess\,sup} \,}}
 \newcommand{\USet}{\mathbb{U}}
 \newcommand{\VSet}{\mathbb{V}}
 \newcommand{\PSU}{\mathbf{u}}
 \newcommand{\PSV}{\mathbf{v}}
 \newcommand{\PSP}{\mathbf{p}}
 \newcommand{\PSQ}{\mathbf{q}}
 \newcommand{\SP}{P}
 \newcommand{\SQ}{Q}
 \newcommand{\SPO}{P^0_{w_0, h}}
 \newcommand{\SQO}{Q^0_{w_0, h}}
 \newcommand{\fA}{f_{w_0, h}}
 \newcommand{\gammaA}{{\gamma_{w_0, h}}}
 \newcommand{\sigmaA}{{\sigma_{w_0, h}}}
 \newcommand{\valA}{\rho_{w_0, h}}
 \newcommand{\valh}{\widehat{\rho}}
 \newcommand{\vall}{\widehat{\rho}\,}
 \newcommand{\G}{{G_\ast}}
 \newcommand{\GA}{{G_h^0}}
 \newcommand{\Val}{\rho}
 \newcommand{\valQSF}{\rho^{(u)}}
 \newcommand{\valQSS}{\rho^{(v)}}
 \newcommand{\valAQSF}{\rho^{(p)}_{w_0, h}}
 \newcommand{\valAQSS}{\rho^{(q)}_{w_0, h}}
 \newcommand{\UPROC}{U}
 \newcommand{\VPROC}{V}
\begin{document}

\title{Solution to Zero-Sum Differential Game with Fractional Dynamics via Approximations}

\titlerunning{Zero-sum differential game with fractional dynamics}

\author{Mikhail Gomoyunov}

\institute{M.~Gomoyunov \at
              Krasovskii Institute of Mathematics and Mechanics of the Ural Branch of the Russian Academy of Sciences,
              S. Kovalevskaya Str., 16, Ekaterinburg, Russia \\
              Ural Federal University,
              Mira Str., 32, Ekaterinburg, Russia \\
              \email{m.i.gomoyunov@gmail.com}
}

\date{Received: date / Accepted: date}

\maketitle

\begin{abstract}
    The paper deals with a zero-sum differential game in which the dynamical system is described by a fractional differential equation with the Caputo derivative of an order $\alpha \in (0, 1).$
    The goal of the first (second) player is to minimize (maximize) the value of a given quality index.
    The main contribution of the paper is the proof of the fact that this differential game has the value, i.e., the lower and upper game values coincide.
    The proof is based on the appropriate approximation of the game by a zero-sum differential game in which the dynamical system is described by a first order functional differential equation of a retarded type.
    It is shown that the values of the approximating differential games have a limit, and this limit is the value of the original game.
    Moreover, the optimal players' feedback control procedures are proposed that use the optimally controlled approximating system as a guide.

\keywords{Differential game \and Value of the game \and Optimal strategies \and Fractional derivative \and Fractional differential equation \and Approximation \and Control with a guide}
\end{abstract}

\section{Introduction}
\label{sec_Introducion}

The paper is devoted to the development of the theory of zero-sum differential games (see, e.g., \cite{Bardi_Capuzzo-Dolcetta_1997,Basar_Olsder_1999,Cardaliaguet_Quincampoix_Saint-Pierre_2007,Fleming_Soner_2006,Friedman_1971,Isaacs_1965,Krasovskii_Subbotin_1988,Lukoyanov_2011,Pontryagin_1981} and the references therein) to the case when a motion of a dynamical system is described a fractional differential equation.
For the basics of fractional calculus, theory of fractional differential equations and their applications, the reader is referred to \cite{Diethelm_2010,Kilbas_Srivastava_Trujillo_2006,Miller_Ross_1993,Podlubny_1999,Samko_Kilbas_Marichev_1993}.

Despite the fact that a great number of various control problems in fractional order systems are intensively studied nowadays, only a few works deal with differential games in such systems (see \cite{Bannikov_2017,Chikrii_Matychyn_2011,Mamatov_Alimov_2018,Petrov_2018} and the references therein).
Furthermore, in these works, only some special classes of linear pursuit-evasion differential games are investigated.

In the paper, we follow the game-theoretical approach \cite{Krasovskii_Krasovskii_1995,Krasovskii_1985,Krasovskii_Subbotin_1988,Lukoyanov_2011,Osipov_1971,Subbotin_1995,Subbotin_Chentsov_1981} and consider a quite general formulation of a zero-sum differential game in a fractional order system.
We suppose that a motion of the system is described by a non-linear fractional differential equation with the Caputo derivative of an order $\alpha \in (0, 1).$
The game is considered on a finite time interval.
The goal of the first (second) player is to minimize (maximize) the value of a given quality index evaluating the system's motion.
The main contribution of the paper is the proof of the fact that the considered differential game has the value, i.e., the lower and upper values of the game coincide.

Due to non-local structure of fractional order derivatives, fractional differential equations are used for describing dynamical systems with the memory effects of a special kind.
It makes these equations close to functional differential equations (see, e.g., \cite{Bellman_Cooke_1963,Hale_Lunel_1993,Kolamnovskii_Myshkis_1992}).
In particular, the Riemann-Liouville fractional integral of the order $(1 - \alpha)$ of the solution to the considered fractional differential equation is, by the definition, the solution to the corresponding first order functional differential equation of a neutral type.
It allows us to introduce a differential game in this neutral type system and study it instead of the original game.
However, to the best of our knowledge, there are no results that can be applied for investigating the obtained differential game.
Namely, in \cite{Baranovskaya_2015,Gomoyunov_Lukoyanov_2018,Gomoyunov_Lukoyanov_Plaksin_2017,Lukoyanov_Gomoyunov_Plaksin_2017,Lukoyanov_Plaksin_2015,Maksimov_1991,Nikol'skii_1972}, only some special classes of neutral type systems are considered,
and, in \cite{Vasil'ev_1972}, the game is considered in the classes of players' programm (open-loop) strategies.

Nevertheless, following \cite{Gomoyunov_2018}, based on the finite-difference Gr\"{u}nwald-Let\-ni\-kov formulas for calculation of fractional derivatives (see, e.g., \cite[p.~386]{Samko_Kilbas_Marichev_1993}), one can approximate the obtained differential game in the first order neutral type system by a differential game in a first order retarded type system.
Let us note that differential games in dynamical systems described by functional differential equations of a retarded type are quite well studied (see, e.g., \cite{Krasovskii_Subbotin_1988,Lukoyanov_2011,Osipov_1971} and the references therein), especially in comparison with differential games in neutral type systems.
Thus, applying the results of \cite{Lukoyanov_2000,Lukoyanov_2003,Lukoyanov_2011}, we derive that the approximating differential game has the value, and, moreover, this value is achieved in the appropriate classes of players' positional (closed-loop) strategies.

Further, based on the ideas from \cite{Krasovskii_Kotelnikova_2012} (see also \cite{Lukoyanov_Plaksin_2015}), to establish a connection between the original and approximating differential games, we consider the players' feedback control procedures that use the optimally controlled approximating system as a guide (see, e.g., \cite[\S~8.2]{Krasovskii_Subbotin_1988}).
It allows us to prove that the values of the approximating games have a limit, and this limit coincides with the value of the original game.
The key point here is the mutual aiming procedure between the original and approximating systems \cite{Gomoyunov_2018} that provides the desired proximity between the systems' motions.
Moreover, in particular, we obtain that the proposed players' control procedures with a guide guarantee the game value with a given accuracy, and, in this sense, they can be called optimal.

Let us note also that differential games give a natural formalization of control problems under conditions of unknown disturbances (see, e.g., \cite{Krasovskii_Krasovskii_1995,Krasovskii_1985,Krasovskii_Subbotin_1988,Subbotin_Chentsov_1981}).
In some other frameworks, such control problems in fractional order systems are studied, e.g., in \cite{Jajarmi_Hajipour_Mohammadzadeh_Baleanu_2018,Shen_Lam_2014}.

The rest of the paper is organized as follows.
In Sect.~\ref{sec_Notations}, we introduce the notations, recall the definitions of fractional order integrals and derivatives, and give some of their properties.
In Sect.~\ref{sec_DG}, the considered differential game in a fractional order system is described, and, in particular, the notion of the game value is defined.
The corresponding differential game in a first order neutral type system is discussed in Sect.~\ref{sec_DSNT}.
In Sect.~\ref{sec_ADG}, we propose an approximation of this game by a differential game in a first order retarded type system.
In Sect.~\ref{sec_CPG}, the mutual aiming procedure between the original and approximating systems and the optimal players' control procedures with a guide are described, the limit of the values of the approximating differential game is introduced.
In Sect.~\ref{sec_Proof}, we prove that the original differential game has the value.
Concluding remarks are given in Sect.~\ref{sec_conclusion}.

\section{Notations and Definitions}
\label{sec_Notations}

Let $t_0, \vartheta \in \mathbb{R},$ $t_0 < \vartheta,$ and $n \in \mathbb{N}$ be fixed.
Let $\mathbb{R}^n$ be the $n$-dimen\-sional Euclidian space with the scalar product $\langle \cdot, \cdot \rangle$ and the norm $\|\cdot\|.$
By $\Linf([t_0, \vartheta], \mathbb{R}^n),$ we denote the space of essentially bounded (Lebesgue) measurable functions $x: [t_0, \vartheta] \rightarrow \mathbb{R}^n$ with the norm
\begin{equation*}
    \|x(\cdot)\|_\infty = \esssup{t \in [t_0, \vartheta]} \|x(t)\|.
\end{equation*}
Let $\C([t_0, \vartheta], \mathbb{R}^n)$ be the space of continuous functions $x: [t_0, \vartheta] \rightarrow \mathbb{R}^n$ with the uniform norm, which is also denoted by $\|\cdot\|_\infty.$
Let $\Lip^0([t_0, \vartheta], \mathbb{R}^n)$ be the set of functions $x(\cdot) \in \C([t_0, \vartheta], \mathbb{R}^n)$ that are Lipschitz continuous and satisfy the equality $x(t_0) = 0.$
For $L \geq 0,$ we denote by $\Lip_L^0([t_0, \vartheta], \mathbb{R}^n)$ the set of functions $x(\cdot) \in \Lip^0([t_0, \vartheta], \mathbb{R}^n)$ that satisfy the Lipschitz condition with this constant $L.$

Let $\alpha \in (0, 1)$ be fixed.
For a function $x: [t_0, \vartheta] \rightarrow \mathbb{R}^n,$ the Riemann-Liouville (R.-L.) fractional integral of the order $\alpha$ and the R.-L. fractional derivative of the order $\alpha$ are respectively defined by
\begin{equation} \label{RL_integral_derivative}
    \begin{array}{rcl}
        (I^\alpha x) (t) & = &
        \displaystyle \frac{1}{\Gamma(\alpha)} \int_{t_0}^{t} \frac{x(\tau)}{(t - \tau)^{1 - \alpha}} d\tau, \\[1.2em]
        (D^\alpha x) (t) & = &
        \displaystyle \frac{d}{dt} (I^{1 - \alpha} x) (t) =
        \frac{1}{\Gamma(1 - \alpha)} \frac{d}{dt} \int_{t_0}^{t} \frac{x(\tau)}{(t - \tau)^{\alpha}} d\tau, \quad t \in [t_0, \vartheta],
    \end{array}
\end{equation}
where $\Gamma$ is the gamma function.
For the properties of the fractional order integrals and derivatives, the reader is referred to \cite{Diethelm_2010,Kilbas_Srivastava_Trujillo_2006,Miller_Ross_1993,Podlubny_1999,Samko_Kilbas_Marichev_1993}.
In this section, we shortly describe those properties that are used in the paper.
The details can also be found in \cite{Gomoyunov_2017,Gomoyunov_2018}.

Let $I^\alpha (\Linf([t_0, \vartheta], \mathbb{R}^n))$ be the set of functions $x: [t_0, \vartheta] \rightarrow \mathbb{R}^n$ that can be represented by the R.-L. fractional integral of the order $\alpha$ of a function $\varphi(\cdot) \in \Linf([t_0, \vartheta], \mathbb{R}^n),$ i.e., $x(t) = (I^\alpha \varphi)(t),$ $t \in [t_0, \vartheta].$

Let $x(\cdot) \in I^\alpha (\Linf([t_0, \vartheta], \mathbb{R}^n)).$
Then the derivative $(D^\alpha x)(t)$ exists for almost every $t \in [t_0, \vartheta];$
the inclusion $(D^\alpha x)(\cdot) \in \Linf([t_0, \vartheta], \mathbb{R}^n)$ is valid;
and $x(t) = (I^\alpha (D^\alpha x))(t),$ $t \in [t_0, \vartheta].$
Moreover, there exists $H > 0$ such that, for any $x(\cdot) \in I^\alpha (\Linf([t_0, \vartheta], \mathbb{R}^n)),$ the following inequality holds:
\begin{equation} \label{H_infty}
    \|x(t) - x(t^\prime)\| \leq H \|(D^\alpha x)(\cdot)\|_\infty |t - t^\prime|^\alpha, \quad t, t^\prime \in [t_0, \vartheta].
\end{equation}
Further, let us consider the function $y(t) = (I^{1 - \alpha} x)(t),$ $t \in [t_0, \vartheta].$
Then, according to (\ref{RL_integral_derivative}), we have $\dot{y}(t) = (D^\alpha x)(t)$ for almost every $t \in [t_0, \vartheta],$ where we denote $\dot{y}(t) = d y(t) / d t;$
the inclusion $y(\cdot) \in \Lip^0_L([t_0, \vartheta], \mathbb{R}^n)$ is valid with the constant $L = \|(D^\alpha x)(\cdot)\|_\infty;$
and the following representation formula holds:
\begin{equation} \label{representation_formula}
    x(t)
    = (D^{1 - \alpha}y)(t)
    = \frac{1}{\Gamma(\alpha)} \int_{t_0}^{t} \frac{\dot{y}(\tau)}{(t - \tau)^{1 - \alpha}} d\tau, \quad t \in [t_0, \vartheta].
\end{equation}

Finally, for a function $x: [t_0, \vartheta] \rightarrow \mathbb{R}^n,$ the Caputo (C.) fractional derivative of the order $\alpha$ is defined by
\begin{equation} \label{Caputo_derivative}
    ({}^C D^\alpha x) (t)
    = \big(D^\alpha (x(\cdot) - x(t_0)) \big) (t),
    \quad t \in [t_0, \vartheta].
\end{equation}
In particular, if $x(t_0) = 0,$ then the R.-L. and C. fractional derivatives coincide.

\section{Differential Game with Fractional Dynamics}
\label{sec_DG}

\subsection{Fractional Order System}
\label{sec_DS}

We consider a dynamical system which motion is described by the following fractional differential equation with the C. derivative of the order $\alpha:$
\begin{equation} \label{system}
    \begin{array}{c}
        (^C D^\alpha x) (t) = f(t, x(t), u(t), v(t)), \quad t \in [t_0, \vartheta], \\[0.5em]
        x(t) \in \mathbb{R}^n, \quad u(t) \in \USet, \quad v(t) \in \VSet.
    \end{array}
\end{equation}
Here $t$ is the time; $x(t)$ is the value of the state vector at the time $t;$ $u(t)$ and $v(t)$ are respectively the values of the control vectors of the first and second players at the time $t;$ $t_0$ and $\vartheta$ are called the initial and terminal times; the sets $\USet \subset \mathbb{R}^r$ and $\VSet \subset \mathbb{R}^s$ are compact, $r, s \in \mathbb{N}.$
We suppose that the function $f: [t_0, \vartheta] \times \mathbb{R}^n \times \USet \times \VSet \rightarrow \mathbb{R}^n$ satisfies the following conditions:
\begin{description}
  \item[($A.1$)] The function $f$ is continuous.

  \item[($A.2$)] For any $R \geq 0,$ there exists $\lambda > 0$ such that
    \begin{equation*}
        \|f(t, x, u, v) - f(t, x^\prime, u, v)\| \leq \lambda \|x - x^\prime\|
    \end{equation*}
    for any $t \in [t_0, \vartheta],$ $x, x^\prime \in B(R) = \{y \in \mathbb{R}^n: \, \|y\| \leq R\},$ $u \in \USet,$ and $v \in \VSet.$

  \item[($A.3$)] There exists $c > 0$ such that
    \begin{equation*}
        \|f(t, x, u, v)\| \leq (1 + \|x\|) c
    \end{equation*}
    for any $t \in [t_0, \vartheta],$ $x \in \mathbb{R}^n,$ $u \in \USet,$ and $v \in \VSet.$

  \item[($A.4$)] The saddle point condition in a small game \cite[p.~8]{Krasovskii_Subbotin_1988} or, in another terminology, the Isaacs' condition \cite[p.~35]{Isaacs_1965}, holds, i.e.,
    \begin{equation*}
        \min_{u \in \USet} \max_{v \in \VSet} \langle s, f(t, x, u, v) \rangle
        = \max_{v \in \VSet} \min_{u \in \USet} \langle s, f(t, x, u, v) \rangle
    \end{equation*}
    for any $t \in [t_0, \vartheta]$ and $x, s \in \mathbb{R}^n.$
\end{description}
Note that these conditions are quite typical for the differential games theory with first order dynamics (see, e.g., \cite[p.~7]{Krasovskii_Subbotin_1988}).

\subsection{Admissible Positions of the System}
\label{subsec_Positions}

By a position of system (\ref{system}), we mean a pair $(t, w(\cdot))$ consisting of a time $t \in [t_0, \vartheta]$ and a function $w(\cdot) \in \C([t_0, t], \mathbb{R}^n),$ which is treated as a motion history on the interval $[t_0, t].$
The set of the positions $(t, w(\cdot))$ is denoted by $G.$
A position $(t, w(\cdot)) \in G$ is called admissible if the relations below are valid:
\begin{equation} \label{G}
    \begin{array}{l}
        w(t_0) \in B(R_0), \\[0.5em]
        w(\cdot) \in \{w(t_0)\} + I^\alpha (\Linf([t_0, t], \mathbb{R}^n)), \\[0.5em]
        \|(^C D^\alpha w) (\tau)\| \leq (1 + \|w(\tau)\|) c \text{ for a.e. } \tau \in [t_0, t],
    \end{array}
\end{equation}
where $R_0 > 0$ is a fixed constant, $c$ is the constant from condition $(A.3).$
According to the definition given in Sect.~\ref{sec_Notations}, the second inclusion in (\ref{G}) means that there exists a function $\varphi(\cdot) \in \Linf([t_0, t], \mathbb{R}^n)$ such that $w(\tau) = w(t_0) + (I^\alpha \varphi)(\tau),$ $\tau \in [t_0, t].$
The set of the admissible positions is denoted by $\G.$

\begin{proposition} \label{prop_G_properties}
    The set $\G$ is not empty, and there exist $R_1 > 0,$ $M_1 > 0,$ and $H_1 > 0$ such that, for any $(t, w(\cdot)) \in \G,$ the inequalities below are valid:
    \begin{equation*} \label{prop_G_properties_main}
        \begin{array}{l}
            \|w(\tau)\| \leq R_1, \quad \tau \in [t_0, t], \\[0.5em]
            \|(^C D^\alpha w) (\tau) \| \leq M_1 \text{ for a.e. } \tau \in [t_0, t], \\[0.5em]
            \|w(\tau) - w(\tau^\prime)\| \leq H_1 |\tau - \tau^\prime|^\alpha, \quad \tau, \tau^\prime \in [t_0, t].
        \end{array}
    \end{equation*}
\end{proposition}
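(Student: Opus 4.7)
The plan is to dispose of non-emptiness first, then verify the three estimates in the order they are listed, using only the definition~(\ref{G}) of admissibility, the representation of elements of $I^\alpha(\Linf)$ recalled in Sect.~\ref{sec_Notations}, and the Hölder bound~(\ref{H_infty}).

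Non-emptiness is immediate: fix any $w_0 \in B(R_0)$ and let $w(\tau) \equiv w_0$ on $[t_0, \vartheta]$. Then $w - w(t_0) \equiv 0 = I^\alpha 0$, so $w(\cdot) \in \{w(t_0)\} + I^\alpha(\Linf([t_0, \vartheta], \mathbb{R}^n))$, and $({}^C D^\alpha w) \equiv 0$ satisfies the last bound in~(\ref{G}) trivially. Hence $(t, w(\cdot)|_{[t_0, t]}) \in \G$ for every $t \in [t_0, \vartheta]$.

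For the uniform bound on $\|w(\tau)\|$, the second line of (\ref{G}) together with the property $x(\cdot) = (I^\alpha D^\alpha x)(\cdot)$ for $x(\cdot) \in I^\alpha(\Linf)$ applied to $w - w(t_0)$ gives $w(\tau) = w(t_0) + (I^\alpha ({}^C D^\alpha w))(\tau)$. Combining this with $\|w(t_0)\| \leq R_0$ and the pointwise Caputo bound in (\ref{G}) yields the Volterra-type integral inequality
\begin{equation*}
\|w(\tau)\| \leq R_0 + \frac{c}{\Gamma(\alpha)} \int_{t_0}^{\tau} \frac{1 + \|w(s)\|}{(\tau - s)^{1 - \alpha}} \, ds, \quad \tau \in [t_0, t].
\end{equation*}
A fractional-order Gronwall inequality (standard in the references cited in Sect.~\ref{sec_Notations}, e.g.\ \cite{Diethelm_2010}) produces a constant $R_1 = R_1(R_0, c, \alpha, \vartheta - t_0)$, independent of the admissible position, such that $\|w(\tau)\| \leq R_1$ for all $\tau \in [t_0, t]$.

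The derivative bound is then automatic: the admissibility constraint $\|({}^C D^\alpha w)(\tau)\| \leq (1 + \|w(\tau)\|) c$ combined with $\|w(\tau)\| \leq R_1$ gives $M_1 := (1 + R_1) c$. The Hölder estimate follows by applying~(\ref{H_infty}) to $w - w(t_0) \in I^\alpha(\Linf)$ --- using $D^\alpha (w - w(t_0)) = {}^C D^\alpha w$ by~(\ref{Caputo_derivative}) --- and taking $H_1 := H M_1$. The only substantive step is the fractional Gronwall argument to handle the singular kernel $(\tau - s)^{\alpha - 1}$; everything else is a direct unwrapping of definitions together with the estimates already collected in the preliminaries.
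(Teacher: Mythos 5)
Your proof is correct and follows essentially the same route as the paper: non-emptiness via the constant function, the Volterra integral inequality plus the fractional Bellman--Gronwall lemma for $R_1$, the immediate bound $M_1 = (1+R_1)c$, and the H\"older estimate from~(\ref{H_infty}) with $H_1 = H M_1$. The only cosmetic difference is that the paper writes $R_1$ explicitly via the Mittag-Leffler function, $R_1 = (1+R_0)E_\alpha((\vartheta - t_0)^\alpha c) - 1$, whereas you leave it as an unspecified constant; this does not affect the argument.
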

\begin{proof}
    Let $t \in [t_0, \vartheta]$ and $w_0 \in B(R_0).$ 
    Let us consider the function $w(\tau) = w_0,$ $\tau \in [t_0, t].$
    According to (\ref{RL_integral_derivative}) and (\ref{Caputo_derivative}), we have $(^C D^\alpha w) (\tau) = 0,$ $\tau \in [t_0, t].$ 
    Hence, the inclusion $(t, w(\cdot)) \in \G$ is valid, and, therefore, the set $\G$ is not empty.

    Further, let us define
    \begin{equation*}
        R_1 = (1 + R_0) E_\alpha ((\vartheta - t_0)^\alpha c) - 1, \quad
        M_1 = (1 + R_1) c, \quad
        H_1 = H M_1,
    \end{equation*}
    where $c$ is the constant from $(A.3),$ $E_\alpha$ is the Mittag-Leffler function (see, e.g., \cite[(1.90)]{Samko_Kilbas_Marichev_1993}), and $H$ is the constant from (\ref{H_infty}).
    Let $(t, w(\cdot)) \in \G.$
    Then, due to (\ref{G}) and the results given in Sect.~\ref{sec_Notations}, we have
    \begin{equation*}
        \|w(\tau) - w(t_0)\|
        = \bigg\| \frac{1}{\Gamma(\alpha)} \int_{t_0}^{\tau} \frac{(^C D^\alpha w)(\xi)}{(\tau - \xi)^{1 - \alpha}} d\xi \bigg\|
        \leq \frac{1}{\Gamma(\alpha)} \int_{t_0}^{\tau} \frac{(1 + \|w(\xi)\|) c}{(\tau - \xi)^{1 - \alpha}} d\xi
    \end{equation*}
    for any $\tau \in [t_0, t],$ and, therefore,
    \begin{equation*}
        \|w(\tau)\|
        \leq R_0 + \frac{c}{\Gamma(\alpha)} \int_{t_0}^{\tau} \frac{1 + \|w(\xi)\|}{(\tau - \xi)^{1 - \alpha}} d\xi,
        \quad \tau \in [t_0, t].
    \end{equation*}
    From this inequality, applying the fractional version of Bellman-Gronwall lemma (see, e.g., \cite[Lemma~6.19]{Diethelm_2010} and also \cite[Lemma~1.1]{Gomoyunov_2017}), we conclude $\|w(\tau)\| \leq R_1,$ $\tau \in [t_0, t].$
    Thus, according to (\ref{G}), we have
    \begin{equation*}
        \|(^C D^\alpha w)(\tau)\|
        \leq (1 + \|w(\tau)\|) c
        \leq (1 + R_1) c = M_1 \text{ for a.e. } \tau \in [t_0, t].
    \end{equation*}
    Finally, by the choice of $H,$ we derive
    \begin{equation*}
        \|w(\tau) - w(\tau^\prime)\|
        \leq H M_1 |\tau - \tau^\prime|^\alpha
        = H_1 |\tau - \tau^\prime|^\alpha, \quad \tau, \tau^\prime \in [t_0, t].
    \end{equation*}
    The proposition is proved. \hfill $\square$
\end{proof}

Let $(t_\ast, w_\ast(\cdot)) \in \G$ and $t^\ast \in [t_\ast, \vartheta].$
By admissible control realizations (controls) of the first and second players on the interval $[t_\ast, t^\ast),$ we mean measurable functions $u: [t_\ast, t^\ast) \rightarrow \USet$ and $v: [t_\ast, t^\ast) \rightarrow \VSet,$ respectively.
The sets of the admissible control realizations of the players are denoted by $\mathcal{U}(t_\ast, t^\ast)$ and $\mathcal{V}(t_\ast, t^\ast).$
Following \cite{Idczak_Kamocki_2011} (see also \cite{Gomoyunov_2017}), by a motion of system (\ref{system}) generated from the initial position $(t_\ast, w_\ast(\cdot))$ by players' control realizations $u(\cdot) \in \mathcal{U}(t_\ast, t^\ast)$ and $v(\cdot) \in \mathcal{V}(t_\ast, t^\ast),$ we mean a function $x(\cdot) \in \{w_\ast(t_0)\} + I^\alpha (\Linf([t_0, t^\ast], \mathbb{R}^n))$ that satisfies the initial condition
\begin{equation} \label{initial_condition}
    x(t) = w_\ast(t), \quad t \in [t_0, t_\ast],
\end{equation}
and, together with $u(\cdot)$ and $v(\cdot),$ satisfies Eq. (\ref{system}) for almost every $t \in [t_\ast, t^\ast].$
For such a motion $x(\cdot)$ and a time $t \in [t_0, t^\ast],$ we denote by $(t, x_t(\cdot))$ the corresponding position of system (\ref{system}), i.e.,
\begin{equation} \label{x_t}
    x_t(\tau) = x(\tau), \quad \tau \in [t_0, t].
\end{equation}

\begin{proposition} \label{prop_existence}
    Let $(t_\ast, w_\ast(\cdot)) \in \G$ and $t^\ast \in [t_\ast, \vartheta].$
    Then any players' control realizations $u(\cdot) \in \mathcal{U}(t_\ast, t^\ast)$ and $v(\cdot) \in \mathcal{V}(t_\ast, t^\ast)$ generate from the initial position $(t_\ast, w_\ast(\cdot))$ a unique motion $x(\cdot)$ of system $(\ref{system}).$
    Moreover, for any $t \in [t_0, t^\ast],$ the inclusion $(t, x_t(\cdot)) \in \G$ is valid.
\end{proposition}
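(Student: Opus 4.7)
The plan is to reformulate the Cauchy problem as a Volterra-type integral equation on $[t_\ast, t^\ast]$ with a fixed history on $[t_0, t_\ast]$, obtain a uniform a priori bound first, and then run a contraction argument; the admissibility conclusion will follow by a direct check using the defining relations of $\G.$

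First, I would unfold the definition of a motion. Since $x(\cdot) \in \{w_\ast(t_0)\} + I^\alpha(\Linf([t_0, t^\ast], \mathbb{R}^n)),$ there is $\varphi(\cdot) \in \Linf([t_0, t^\ast], \mathbb{R}^n)$ with $x(\tau) = w_\ast(t_0) + (I^\alpha \varphi)(\tau).$ The initial condition (\ref{initial_condition}) together with the inclusion $(t_\ast, w_\ast(\cdot)) \in \G$ forces $\varphi(\tau) = ({}^C D^\alpha w_\ast)(\tau)$ for a.e.\ $\tau \in [t_0, t_\ast],$ while Eq.\ (\ref{system}) forces $\varphi(\tau) = f(\tau, x(\tau), u(\tau), v(\tau))$ for a.e.\ $\tau \in [t_\ast, t^\ast].$ Splitting the fractional integral at $t_\ast,$ the problem becomes the fixed-point equation
\begin{equation*}
    x(t) = g(t) + \frac{1}{\Gamma(\alpha)} \int_{t_\ast}^{t} \frac{f(\tau, x(\tau), u(\tau), v(\tau))}{(t - \tau)^{1 - \alpha}} \, d\tau, \quad t \in [t_\ast, t^\ast],
\end{equation*}
where $g(t) = w_\ast(t_0) + \Gamma(\alpha)^{-1} \int_{t_0}^{t_\ast} ({}^C D^\alpha w_\ast)(\tau) (t-\tau)^{\alpha - 1} \, d\tau$ is a continuous (in fact Hölder, by (\ref{H_infty})) function of $t$ determined entirely by the initial history.

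Next, I would obtain an a priori bound. For any candidate solution $x(\cdot),$ condition $(A.3)$ gives $\|\varphi(\tau)\| \le (1 + \|x(\tau)\|) c$ a.e.\ on $[t_0, t^\ast],$ and applying the fractional Bellman--Gronwall lemma (exactly as in the proof of Proposition~\ref{prop_G_properties}) yields $\|x(\tau)\| \le R_1$ for all $\tau \in [t_0, t^\ast],$ with the same $R_1$ as there. This reduces the problem to solving the integral equation with values in the compact ball $B(R_1),$ on which $(A.2)$ supplies a Lipschitz constant $\lambda.$ I would then run Picard iteration in $C([t_\ast, t^\ast], B(R_1))$ equipped with a Bielecki-type weighted norm $\|y\|_\ast = \sup_{t} e^{-K(t - t_\ast)} \|y(t)\|$ for a large $K,$ or, equivalently, estimate successive iterates directly using the identity $\int_{t_\ast}^{t} (t-\tau)^{\alpha-1} (\tau - t_\ast)^{k\alpha} / \Gamma(k\alpha + 1) \, d\tau = \Gamma(\alpha) (t - t_\ast)^{(k+1)\alpha} / \Gamma((k+1)\alpha + 1),$ so that the $k$-th iterate contracts like a Mittag-Leffler tail. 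This yields simultaneously global existence and uniqueness on $[t_\ast, t^\ast].$ Alternatively, since this is a Caputo Cauchy problem with measurable controls, one may cite \cite{Idczak_Kamocki_2011,Gomoyunov_2017} directly for existence and uniqueness.

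Finally, the admissibility $(t, x_t(\cdot)) \in \G$ is a verification of the three relations in (\ref{G}) for each $t \in [t_0, t^\ast].$ The first, $x_t(t_0) = w_\ast(t_0) \in B(R_0),$ is inherited from $(t_\ast, w_\ast(\cdot)) \in \G.$ The second follows because the restriction of $\varphi$ to $[t_0, t]$ lies in $\Linf$ and, by the representation formula (\ref{representation_formula}), $x_t(\tau) = w_\ast(t_0) + (I^\alpha \varphi|_{[t_0, t]})(\tau).$ The third splits across $t_\ast$: for a.e.\ $\tau \in [t_0, \min(t, t_\ast)],$ $({}^C D^\alpha x_t)(\tau) = ({}^C D^\alpha w_\ast)(\tau)$ and admissibility of $w_\ast$ applies; for a.e.\ $\tau \in [t_\ast, t]$ (when $t > t_\ast$), $({}^C D^\alpha x_t)(\tau) = f(\tau, x(\tau), u(\tau), v(\tau))$ and $(A.3)$ applies.

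The main obstacle I expect is the coupling of past and future induced by the non-local fractional integral: the history on $[t_0, t_\ast]$ feeds the evolution on $[t_\ast, t^\ast]$ through $g(t),$ and the fact that $f$ is only \emph{locally} Lipschitz in $x$ means a naive contraction argument on a short interval need not propagate. This is precisely why the a priori bound from $(A.3)$ must be established before the fixed-point step, so that Lipschitz constant $\lambda$ from $(A.2)$ can be taken uniformly on $B(R_1)$ throughout $[t_\ast, t^\ast]$ and the contraction closes globally.
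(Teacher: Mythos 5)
Your proposal is correct and follows essentially the same route as the paper: the paper likewise reduces the problem to the Volterra integral equation obtained by splitting the fractional integral at $t_\ast$ (your $g(t)$ is exactly its history term), invokes the standard existence--uniqueness scheme via the references you also name, and verifies the inclusion $(t, x_t(\cdot)) \in \G$ from the initial condition and $(A.3)$. You merely spell out the a priori bound and contraction details that the paper delegates to the cited theorems.
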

\begin{proof}
    Let $(t_\ast, w_\ast(\cdot)) \in \G,$ $t^\ast \in [t_\ast, \vartheta],$ $u(\cdot) \in \mathcal{U}(t_\ast, t^\ast),$ and $v(\cdot) \in \mathcal{V}(t_\ast, t^\ast).$
    The existence and uniqueness of the corresponding motion $x(\cdot)$ of system (\ref{system}) can be proved by the standard scheme (see, e.g., \cite[Theorem~6.1]{Diethelm_2010}, \cite[Theorem~3.1]{Wang_Zhou_2011}, and also \cite[Theorem~2.1]{Gomoyunov_2017}), if we note that $x(\cdot)$ is the motion of system (\ref{system}) if and only if $x(\cdot)$ satisfies the inclusion $x(\cdot) \in \C([t_0, t^\ast], \mathbb{R}^n),$ initial condition (\ref{initial_condition}), and the integral equation
    \begin{equation*}
        \begin{array}{l}
            \displaystyle
            x(t) = w_\ast(t_0)
            + \frac{1}{\Gamma(\alpha)} \int_{t_0}^{t_\ast} \frac{(^C D^\alpha w_\ast)(\tau)}{(t - \tau)^{1 - \alpha}} d \tau \\[1.2em]
            \displaystyle
            + \frac{1}{\Gamma(\alpha)} \int_{t_\ast}^{t} \frac{f(\tau, x(\tau), u(\tau), v(\tau))}{(t - \tau)^{1 - \alpha}} d \tau,
            \quad t \in [t_\ast, t^\ast].
        \end{array}
    \end{equation*}
    Further, for $t \in [t_0, t_\ast],$ the inclusion $(t, x_t(\cdot)) \in \G$ follows from initial condition (\ref{initial_condition}) and the inclusion $(t_\ast, w_\ast(\cdot)) \in \G.$
    For $t \in (t_\ast, t^\ast],$ the inclusion $(t, x_t(\cdot)) \in \G$ is valid due to $(A.3).$
    The proposition is proved.
    \hfill $\square$
\end{proof}

From Propositions~\ref{prop_G_properties} and~\ref{prop_existence} we derive the following result.
\begin{corollary} \label{cor_motion_properties}
    Let $(t_\ast, w_\ast(\cdot)) \in \G$ and $t^\ast \in [t_\ast, \vartheta].$
    Let $x(\cdot)$ be the motion of system $(\ref{system})$ generated from the initial position $(t_\ast, w_\ast(\cdot))$ by players' control realizations $u(\cdot) \in \mathcal{U}(t_\ast, t^\ast)$ and $v(\cdot) \in \mathcal{V}(t_\ast, t^\ast).$
    Then the following inequalities hold:
    \begin{equation*}
        \|x(t)\| \leq R_1, \quad
        \|x(t) - x(t^\prime)\| \leq H_1 |t - t^\prime|^\alpha, \quad t, t^\prime \in [t_0, t^\ast],
    \end{equation*}
    where the constants $R_1$ and $H_1$ are taken from Proposition~$\ref{prop_G_properties}.$
\end{corollary}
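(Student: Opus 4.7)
The plan is to combine the two preceding propositions and observe that the conclusion is essentially a restatement of Proposition~\ref{prop_G_properties} applied to the position generated at the terminal time. First I would invoke Proposition~\ref{prop_existence} at the time $t = t^\ast$ to obtain the admissibility inclusion $(t^\ast, x_{t^\ast}(\cdot)) \in \G$. By the definition \eqref{x_t}, the history function $x_{t^\ast}(\cdot)$ coincides with $x(\cdot)$ on the entire interval $[t_0, t^\ast]$, so any bound on $x_{t^\ast}(\cdot)$ is automatically a bound on $x(\cdot)$ on the full interval of interest.

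Next I would apply Proposition~\ref{prop_G_properties} to this admissible position $(t^\ast, x_{t^\ast}(\cdot))$. That proposition yields exactly the two sought-after inequalities in terms of $x_{t^\ast}$, namely the uniform bound $\|x_{t^\ast}(\tau)\| \leq R_1$ and the H\"older estimate $\|x_{t^\ast}(\tau) - x_{t^\ast}(\tau^\prime)\| \leq H_1 |\tau - \tau^\prime|^\alpha$ for $\tau, \tau^\prime \in [t_0, t^\ast]$ with the same constants $R_1$ and $H_1$. Rewriting these in terms of $x$ via \eqref{x_t} delivers the claim.

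Since both ingredients are already in place, there is no substantive obstacle here: the content of the corollary is simply that the constants $R_1$ and $H_1$, chosen uniformly over all of $\G$ in Proposition~\ref{prop_G_properties}, transfer automatically to every motion produced by admissible controls from an admissible starting position. The only point worth flagging in the write-up is the identification $x_{t^\ast}(\cdot) = x(\cdot)$ on $[t_0, t^\ast]$, which makes the passage from position-based bounds to motion-based bounds immediate; no fractional calculus estimate needs to be repeated.
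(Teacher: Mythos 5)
Your argument is correct and is exactly the route the paper intends: the paper derives the corollary directly from Propositions~\ref{prop_G_properties} and~\ref{prop_existence}, i.e., by noting $(t^\ast, x_{t^\ast}(\cdot)) \in \G$ and then applying the uniform bounds of Proposition~\ref{prop_G_properties} to that admissible position, with $x_{t^\ast}(\cdot) = x(\cdot)$ on $[t_0, t^\ast]$. No further comment is needed.
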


Let us note also the following property of motions of system (\ref{system}), which follows directly from Proposition~\ref{prop_existence}.
Let $(t_\ast, w_\ast(\cdot)) \in \G,$ $t^\ast \in [t_\ast, \vartheta],$ and let $x(\cdot)$ be the motion generated from $(t_\ast, w_\ast(\cdot))$ by $u(\cdot) \in \mathcal{U}(t_\ast, t^\ast)$ and $v(\cdot) \in \mathcal{V}(t_\ast, t^\ast).$
Further, let $t^{\ast \ast} \in [t^\ast, \vartheta],$ and let $x^\ast(\cdot)$ be the motion generated from $(t^\ast, x_{t^\ast}(\cdot))$ by $u^\ast(\cdot) \in \mathcal{U}(t^\ast, t^{\ast \ast})$ and $v^\ast(\cdot) \in \mathcal{V}(t^\ast, t^{\ast \ast}).$
Then $x^\ast(\cdot)$ can be considered as the motion generated from $(t_\ast, w_\ast(\cdot))$ by the realizations
\begin{equation*}
    u^{\ast \ast}(t) =
    \begin{cases}
        u(t), & t \in [t_\ast, t^\ast), \\[0.3em]
        u^\ast(t), & t \in [t^\ast, t^{\ast \ast}),
    \end{cases}
    \quad v^{\ast \ast}(t) =
    \begin{cases}
        v(t), & t \in [t_\ast, t^\ast), \\[0.3em]
        v^\ast(t), & t \in [t^\ast, t^{\ast \ast}).
    \end{cases}
\end{equation*}
In particular, this property allows us to consider step-by-step feedback control procedures for constructing players' control realizations (see Sect.~\ref{sec_CPG}).

\subsection{Quality Index}
\label{subsec_QI}

Let $x(\cdot)$ be the motion of system (\ref{system}) generated from an initial position $(t_\ast, w_\ast(\cdot)) \in \G$ by players' control realizations $u(\cdot) \in \mathcal{U}(t_\ast, \vartheta)$ and $v(\cdot) \in \mathcal{V}(t_\ast, \vartheta).$
Let quality of this motion be evaluated by the index
\begin{equation} \label{quality_index}
    \gamma = \sigma(x(\cdot)).
\end{equation}
We suppose that the function $\sigma: \C([t_0, \vartheta], \mathbb{R}^n) \rightarrow \mathbb{R}$ satisfies the following condition:
\begin{description}
  \item[($A.5$)] The function $\sigma$ is continuous.
\end{description}

For dynamical system (\ref{system}) and quality index (\ref{quality_index}), we consider a zero-sum differential game in which the first player aims to minimize the value of the quality index, and the second player aims to maximize it.

\subsection{Non-anticipative Strategies and the Game Value}
\label{subsec_NAS}

To define the value of the differential game (\ref{system}), (\ref{quality_index}), we consider non-anticipative strategies of the players (see, e.g., \cite[Ch.~VIII]{Bardi_Capuzzo-Dolcetta_1997} and the references therein) and introduce the lower and upper values of the game.
Note that, in another terminology, such strategies are called quasi-strategies (see, e.g., \cite[p.~24]{Subbotin_Chentsov_1981}) or progressive strategies (see, e.g., \cite[\S~XI.4]{Fleming_Soner_2006}).

Let $(t_\ast, w_\ast(\cdot)) \in \G$ be an initial position.
By a non-anticipative strategy of the first player, we mean a function $\alpha: \mathcal{V}(t_\ast, \vartheta) \rightarrow \mathcal{U}(t_\ast, \vartheta)$ with the following property.
For any $t^\ast \in [t_\ast, \vartheta]$ and any second player's control realizations $v(\cdot), v^\prime (\cdot) \in \mathcal{V}(t_\ast, \vartheta),$ if the equality $v(t) = v^\prime (t)$ is valid for almost every $t \in [t_\ast, t^\ast],$ then the corresponding images $u(\cdot) = \alpha(v(\cdot))$ and $u^\prime (\cdot) = \alpha(v^\prime(\cdot))$ satisfy the equality $u(t) = u^\prime (t)$ for almost every $t \in [t_\ast, t^\ast].$
The lower value of the differential game (\ref{system}), (\ref{quality_index}) is defined by
\begin{equation} \label{lower_value}
    \valQSF (t_\ast, w_\ast(\cdot)) = \inf_{\alpha} \ \sup_{v(\cdot) \in \mathcal{V}(t_\ast, \vartheta)} \ \gamma,
\end{equation}
where $\gamma$ is the value of quality index (\ref{quality_index}) that corresponds to the motion $x(\cdot)$ generated from $(t_\ast, w_\ast(\cdot)) \in \G$ by the second player's control realization $v(\cdot)$ and the first player's control realization $u(\cdot) = \alpha(v(\cdot)).$

Similarly, a function $\beta: \mathcal{U}(t_\ast, \vartheta) \rightarrow \mathcal{V}(t_\ast, \vartheta)$ is a non-anticipative strategy of the second player if, for any $t^\ast \in [t_\ast, \vartheta]$ and any $u(\cdot), u^\prime (\cdot) \in \mathcal{U}(t_\ast, \vartheta)$ such that $u(t) = u^\prime (t)$ for almost every $t \in [t_\ast, t^\ast],$ we have $v(t) = v^\prime (t)$ for almost every $t \in [t_\ast, t^\ast],$ where $v(\cdot) = \beta(u(\cdot))$ and $v^\prime (\cdot) = \beta(u^\prime(\cdot)).$
The upper value of the game is defined by
\begin{equation*}
    \valQSS (t_\ast, w_\ast(\cdot))
    = \sup_{\beta} \ \inf_{u(\cdot) \in \mathcal{U}(t_\ast, \vartheta)} \ \gamma.
\end{equation*}

If the lower and upper game values coincide for any initial position $(t_\ast, w_\ast(\cdot)) \in \G,$ then we say that the game has the value
\begin{equation*}
    \Val (t_\ast, w_\ast(\cdot)) = \valQSF (t_\ast, w_\ast(\cdot)) = \valQSS (t_\ast, w_\ast(\cdot)), \quad (t_\ast, w_\ast(\cdot)) \in \G.
\end{equation*}

The goal of the paper is to prove that the differential game (\ref{system}), (\ref{quality_index}) has the value, and, for any initial position $(t_\ast,w_\ast(\cdot)) \in \G,$ construct the players' feedback control procedures that guarantee the game value $\Val (t_\ast, w_\ast(\cdot))$ with a given accuracy $\zeta > 0.$
These results are formulated in Theorem~\ref{thm_Existence} (see Sect.~\ref{sec_Proof}).
The proof of this theorem follows the scheme from \cite[Theorem~2]{Lukoyanov_Plaksin_2015} and is based on the appropriate approximation of the differential game (\ref{system}), (\ref{quality_index}).
Before describing this approximation, in the next section, we rewrite the considered differential game in another form.

\section{Differential Game in a Neutral Type System}
\label{sec_DSNT}

Let $x(\cdot)$ be the motion of system (\ref{system}) generated from an initial position $(t_\ast, w_\ast(\cdot)) \in \G$ by players' control realizations $u(\cdot) \in \mathcal{U}(t_\ast, \vartheta)$ and $v(\cdot) \in \mathcal{V}(t_\ast, \vartheta).$
Let us consider the function
\begin{equation} \label{y}
    y(t) = \big(I^{1 - \alpha} (x(\cdot) - w_\ast(t_0)) \big) (t), \quad t \in [t_0, \vartheta].
\end{equation}
Since $x(\cdot) \in \{w_\ast(t_0)\} + I^\alpha(\Linf([t_0, \vartheta], \mathbb{R}^n)),$ then, according to the results given in Sect.~\ref{sec_Notations}, we have
\begin{equation} \label{y_Lip^0}
    \begin{array}{l}
        y(\cdot) \in \Lip^0([t_0, \vartheta], \mathbb{R}^n), \\[0.5em]
        \dot{y}(t) = (^C D^\alpha x)(t) \text{ for a.e. } t \in [t_0, \vartheta], \\[0.5em]
        x(t) = w_\ast(t_0) + (D^{1 - \alpha} y)(t), \quad t \in [t_0, \vartheta].
    \end{array}
\end{equation}
Substituting these equalities into Eq. (\ref{system}), we obtain that, instead of the original differential game (\ref{system}), (\ref{quality_index}), one can consider the differential game for the dynamical system
\begin{equation} \label{system_neutral_type}
    \dot{y}(t) = f\big(t, w_\ast(t_0) + (D^{1 - \alpha} y)(t), u(t), v(t)\big), \quad t \in [t_\ast, \vartheta],
\end{equation}
under the initial condition
\begin{equation} \label{initial_condition_neutral_type}
    y(t) = \big(I^{1 - \alpha} (w_\ast(\cdot) - w_\ast(t_0))\big)(t), \quad t \in [t_0, t_\ast],
\end{equation}
and the quality index
\begin{equation} \label{quality_index_neutral_type}
    \gamma = \sigma\big( w_\ast(t_0) + (D^{1 - \alpha} y)(\cdot) \big).
\end{equation}
Furthermore, due to (\ref{representation_formula}), one can rewrite Eq. (\ref{system_neutral_type}) as follows:
\begin{equation} \label{system_neutral_type_rewritten}
    \dot{y}(t) = f\Big(t, w_\ast(t_0) + \frac{1}{\Gamma(\alpha)} \int_{t_0}^{t} \frac{\dot{y}(\tau)}{(t - \tau)^{1 - \alpha}} d\tau, u(t), v(t)\Big),
    \quad t \in [t_\ast, \vartheta].
\end{equation}
Note that the right-hand side of Eq. (\ref{system_neutral_type_rewritten}) depends explicitly on the history of the derivative $\dot{y}(\tau)$ for $\tau \in [t_0, t].$
Therefore, in the terminology of the theory of functional differential equations (see, e.g., \cite{Bellman_Cooke_1963,Hale_Lunel_1993,Kolamnovskii_Myshkis_1992}), Eq. (\ref{system_neutral_type_rewritten}) is a functional differential equation of a neutral type.
To the best of our knowledge, in the theory of differential games in neutral type systems (see the references in Introduction), there are no results that can be directly applied for studying the game (\ref{system_neutral_type}), (\ref{quality_index_neutral_type}), and, therefore, the original game (\ref{system}), (\ref{quality_index}) too.
However, as it is shown in the next section, the game (\ref{system_neutral_type}), (\ref{quality_index_neutral_type}) can be approximated by a differential game in a retarded type system.

\section{Approximating Differential Game}
\label{sec_ADG}

Following \cite[Sect.~6]{Gomoyunov_2018}, let us approximate in relations (\ref{system_neutral_type}), (\ref{quality_index_neutral_type}) the fractional derivative $(D^{1 - \alpha} y)(t)$ by the divided fractional difference $h^{\alpha - 1} (\Delta_h^{1 - \alpha} y) (t)$ with a step size $h > 0,$ where (see, e.g., \cite[p.~385]{Samko_Kilbas_Marichev_1993})
\begin{equation} \label{fractional_difference}
    (\Delta_h^{1 - \alpha} y) (t) = \sum_{i = 0}^{[(t - t_0) / h]} (-1)^i \binom{1 - \alpha}{i} y(t - i h), \quad t \in [t_0, \vartheta],
\end{equation}
the symbol $[\tau]$ means the integer part of $\tau \geq 0,$ and $\binom{1 - \alpha}{i}$ are the binomial coefficients.
In this section, we study the differential game obtained after this approximation.

\subsection{Approximating Dynamical System and Quality Index}
\label{subsec_ADS}

Let us fix a vector $w_0 \in B(R_0)$ and a sufficiently small value of the parameter $h > 0.$
Note that, in what follows, the vector $w_0$ corresponds to an initial position $(t_\ast, w_\ast(\cdot)) \in \G$ of system (\ref{system}) such that $w_0 = w_\ast(t_0).$
Taking into account the above, we consider the following zero-sum differential game, determined by these two parameters $w_0$ and $h.$
We introduce the approximating dynamical system which motion is described by the differential equation
\begin{equation} \label{system_y}
    \begin{array}{c}
        \dot{y}(t) = f\big(t, w_0 + h^{\alpha - 1} (\Delta_h^{1 - \alpha} y) (t), p(t), q(t)\big), \quad t \in [t_0, \vartheta], \\[0.5em]
        y(t) \in \mathbb{R}^n, \quad p(t) \in \USet, \quad q(t) \in \VSet,
    \end{array}
\end{equation}
and the approximating quality index
\begin{equation}\label{quality_index_y}
    \gammaA = \sigma \big(w_0 + h^{\alpha - 1} (\Delta_h^{1 - \alpha} y)(\cdot)\big).
\end{equation}
Here $y(t)$ is the value of the state vector; $p(t)$ and $q(t)$ are respectively the values of the control vectors of the first and second players.
The first player minimizes the value of quality index (\ref{quality_index_y}), the second player maximizes it.

Note that, according to (\ref{fractional_difference}), at a time $t \in [t_0, \vartheta],$ the right-hand side of Eq. (\ref{system_y}) depends on the values $y(t - ih)$ for $i \in \overline{0, [(t - t_0) / h]}$ and, in contrast to (\ref{system_neutral_type_rewritten}), does not depend explicitly on the history of the derivative $\dot{y}(\tau),$ $\tau \in [t_0, t].$
Thus, Eq. (\ref{system_y}) is a functional differential equation of a retarded type.
In what follows, dealing with the game (\ref{system_y}), (\ref{quality_index_y}), we mainly use the constructions and results from \cite{Lukoyanov_2000,Lukoyanov_2003,Lukoyanov_2011}.

\begin{remark}
    Let us note that, even in a simple case when original quality index (\ref{quality_index}) is terminal, i.e., $\gamma = \mu(x(\vartheta))$ for a function $\mu: \mathbb{R}^n \rightarrow \mathbb{R},$ the corresponding approximating quality index $\gammaA = \mu(w_0 + h^{\alpha - 1} (\Delta_h^{1 - \alpha} y)(\vartheta))$ is still non-terminal, since, according to (\ref{fractional_difference}), it depends on the values $y(\vartheta - ih)$ for $i \in \overline{0, [(\vartheta - t_0) / h]}.$
\end{remark}

Taking into account (\ref{y}) and (\ref{y_Lip^0}), by a position of approximating system (\ref{system_y}), we mean a pair $(t, r(\cdot)) \in G$ such that $r(t_0) = 0.$
The set of such positions is denoted by $G^0.$
This set is considered with the metric (see, e.g., \cite{Lukoyanov_2003} and also \cite[p.~25]{Lukoyanov_2011})
\begin{equation*}
    \dist\big((t, r(\cdot)), (t^\prime, r^\prime(\cdot))\big)
    = \max \Big\{ \dist^\ast\big((t, r(\cdot)), (t^\prime, r^\prime(\cdot))\big),
    \dist^\ast\big((t^\prime, r^\prime(\cdot)), (t, r(\cdot))\big)\Big\},
\end{equation*}
where $(t, r(\cdot)), (t^\prime, r^\prime(\cdot)) \in G^0,$ and
\begin{equation*}
    \dist^\ast\big((t, r(\cdot)), (t^\prime, r^\prime(\cdot))\big)
    = \max_{\tau \in [t_0, t]} \ \min_{\tau^\prime \in [t_0, t^\prime]} \ \Big((\tau - \tau^\prime)^2 + \|r(\tau) - r^\prime(\tau^\prime)\|^2\Big)^{1/2}.
\end{equation*}
By the right-hand side of Eqs. (\ref{system_y}), (\ref{quality_index_y}), let us define the functions
\begin{equation*}
    \begin{array}{l}
        \fA(t, r(\cdot), p, q) = f\big(t, w_0 + h^{\alpha - 1} (\Delta_h^{1 - \alpha} r) (t), p, q\big), \\[0.5em]
        \sigmaA (y(\cdot)) = \sigma \big(w_0 + h^{\alpha - 1} (\Delta_h^{1 - \alpha} y)(\cdot)\big),
    \end{array}
\end{equation*}
where $(t, r(\cdot)) \in G^0,$ $p \in \USet,$ $q \in \VSet,$ and $(\vartheta,y(\cdot)) \in G^0.$

Directly from properties ($A.1$)--($A.5$) of the functions $f$ and $\sigma$ it follows that these functions $\fA$ and $\sigmaA$ satisfy the following conditions:
\begin{description} 
    \item{($B.1$)} For any $h > 0,$ the functions $\fA$ and $\sigmaA$ are continuous uniformly in $w_0 \in B(R_0).$

    \item{($B.2$)} For any $h > 0$ and any $R \geq 0,$ there exists $\lambda_h > 0$ such that, for any $w_0 \in B(R_0),$ the inequality
        \begin{equation*}
            \|\fA(t, r(\cdot), p, q) - \fA(t, r^\prime(\cdot), p, q)\| \leq \lambda_h \max_{\tau \in [t_0, t]} \|r(\tau) - r^\prime(\tau)\|
        \end{equation*}
        is valid for any $(t, r(\cdot)),$ $(t, r^\prime(\cdot)) \in G^0$ satisfying $\|r(\cdot)\|_\infty \leq R,$ $\|r^\prime(\cdot)\|_\infty \leq R$ and any $p \in \USet,$ $q \in \VSet.$

    \item{($B.3$)} For any $h > 0,$ there exists $c_h > 0$ such that, for any $w_0 \in B(R_0),$ the estimate
        \begin{equation*}
            \|\fA(t, r(\cdot), p, q)\| \leq (1 + \max_{\tau \in [t_0, t]} \|r(\tau)\|) c_h
        \end{equation*}
        holds for any $(t, r(\cdot)) \in G^0,$ $p \in \USet,$ and $q \in \VSet.$

    \item{($B.4$)} For any $w_0 \in B(R_0)$ and any $h > 0,$ the function $\fA$ satisfies the saddle point condition in a small game, i.e.,
        \begin{equation*}
            \begin{array}{c}
                \displaystyle
                \min_{p \in \USet} \max_{q \in \VSet} \langle s, \fA(t, r(\cdot), p, q) \rangle
                = \max_{q \in \VSet} \min_{p \in \USet} \langle s, \fA(t, r(\cdot), p, q) \rangle
            \end{array}
        \end{equation*}
        for any $(t, r(\cdot)) \in G^0$ and $s \in \mathbb{R}^n.$
\end{description}

According to (\ref{initial_condition_neutral_type}), if an initial position $(t_\ast, w_\ast(\cdot)) \in \G$ of original system (\ref{system}) is given, we define the corresponding initial position $(t_\ast, r_\ast(\cdot)) \in G^0$ of approximating system (\ref{system_y}) as follows:
\begin{equation} \label{r_ast}
    r_\ast (t) = \big(I^{1 - \alpha} (w_\ast(\cdot) - w_\ast(t_0)) \big) (t), \quad t \in [t_0, t_\ast].
\end{equation}
Due to Proposition~\ref{prop_G_properties} and the results given in Sect.~\ref{sec_Notations}, the function $r_\ast(\cdot)$ satisfies the inclusion $r_\ast(\cdot) \in \Lip_{M_1}^0([t_0, t_\ast], \mathbb{R}^n).$
Taking this into account, we call a position $(t, r(\cdot)) \in G^0$ of approximating system (\ref{system_y}) admissible if
\begin{equation*}
    \begin{array}{l}
        r(\cdot) \in \Lip^0([t_0, t], \mathbb{R}^n), \\[0.5em]
        \displaystyle
        \|\dot{r}(\tau)\| \leq (1 + \max_{\xi \in [t_0, \tau]} \|r(\xi)\|) \widetilde{c}_h \text{ for a.e. } \tau \in [t_0, t],
    \end{array}
\end{equation*}
where $\widetilde{c}_h = \max \{M_1, c_h\},$ and $c_h$ is the constant from condition ($B.3$).
The set of such admissible positions is denoted by $\GA.$
Note that this set is independent on the parameter $w_0.$

Let $(t_\ast, r_\ast(\cdot)) \in \GA$ and $t^\ast \in [t_\ast, \vartheta].$ As in Sect.~\ref{subsec_Positions}, by admissible control realizations of the players in the approximating game (\ref{system_y}), (\ref{quality_index_y}), we mean functions $p(\cdot) \in \mathcal{U}(t_\ast, t^\ast)$ and $q(\cdot) \in \mathcal{V}(t_\ast, t^\ast).$
Due to properties ($B.1$)--($B.3$), from the initial position $(t_\ast, r_\ast(\cdot)),$ such control realizations $p(\cdot)$ and $q(\cdot)$ uniquely generate the motion of approximating system (\ref{system_y}) that is the function $y(\cdot) \in \Lip^0([t_0, t^\ast], \mathbb{R}^n)$ satisfying the initial condition $y(t) = r_\ast(t),$ $t \in [t_0, t_\ast],$ and, together with $p(\cdot)$ and $q(\cdot),$ satisfying Eq. (\ref{system_y}) for almost every $t \in [t_\ast, t^\ast].$

Let us note the following properties of the set $\GA.$
Firstly, for any $(t_\ast, w_\ast(\cdot)) \in \G,$ the inclusion $(t_\ast, r_\ast(\cdot)) \in \GA$ is valid for the function $r_\ast(\cdot)$ defined by (\ref{r_ast}).
Secondly, for the motion $y(\cdot)$ of approximating system (\ref{system_y}) generated from $(t_\ast, r_\ast(\cdot)) \in \GA$ by $p(\cdot) \in \mathcal{U}(t_\ast, \vartheta)$ and $q(\cdot) \in \mathcal{V}(t_\ast, \vartheta),$ the inclusion $(t, y_t(\cdot)) \in \GA$ holds for any $t \in [t_\ast, \vartheta],$ where, according to (\ref{x_t}), we denote $y_t(\tau) = y(\tau),$ $\tau \in [t_0, t].$
Finally, the set $\GA$ is a compact subset of $G^0.$

Following the the scheme from \cite[Lemma~2]{Gomoyunov_2018} and taking into account that the constant $R_1$ in Proposition~\ref{prop_G_properties} does not depend on an initial position $(t_\ast, w_\ast(\cdot)) \in \G,$ one can prove the result below.
\begin{proposition} \label{prop_L}
    There exists $L_1 > 0$ such that the following statement holds.
    Let $(t_\ast, w_\ast(\cdot)) \in \G$ be an initial position of original system $(\ref{system}).$
    Let us consider approximating system $(\ref{system_y})$ for $w_0 = w_\ast(t_0),$ any $h > 0,$ and under the initial position $(t_\ast, r_\ast(\cdot))$ defined by $(\ref{r_ast}).$
    Then the inclusion $y(\cdot) \in \Lip_{L_1}^0([t_0, \vartheta], \mathbb{R}^n)$ is valid for any motion $y(\cdot)$ of the approximating system generated from $(t_\ast, r_\ast(\cdot))$ by $p(\cdot) \in \mathcal{U}(t_\ast, \vartheta)$ and $q(\cdot) \in \mathcal{V}(t_\ast, \vartheta).$
\end{proposition}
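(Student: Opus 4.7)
From Eq.~(\ref{system_y}) and condition $(A.3)$, with $w_0 = w_\ast(t_0) \in B(R_0)$,
\begin{equation*}
    \|\dot{y}(t)\| \leq \bigl(1 + R_0 + \|h^{\alpha - 1}(\Delta_h^{1 - \alpha}y)(t)\|\bigr) c, \quad t \in [t_\ast, \vartheta],
\end{equation*}
so the proposition reduces to an a priori bound on $\|h^{\alpha-1}(\Delta_h^{1-\alpha}y)(t)\|$ that is uniform in $h$, in the control realizations, and in $(t_\ast, w_\ast(\cdot)) \in \G$. Once this is obtained, together with $\|\dot{y}(\tau)\| = \|\dot{r}_\ast(\tau)\| \leq M_1$ on $[t_0, t_\ast]$ and $y(t_0) = 0$, the Lipschitz conclusion is immediate (with $L_1$ chosen to exceed both $M_1$ and the resulting bound).

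The guiding heuristic is that $h^{\alpha - 1}(\Delta_h^{1 - \alpha}y)(t)$ is the Gr\"unwald-Letnikov discretization of $(D^{1-\alpha}y)(t)$, which in the continuous setting equals $x(t) - w_0$ by (\ref{representation_formula}) together with (\ref{y_Lip^0}) and is therefore uniformly bounded by $R_1 + R_0$ via Proposition~\ref{prop_G_properties}. I would transport this bound to the discrete setting as follows. Exploiting $y(t_0) = 0$, I apply Abel summation to (\ref{fractional_difference}) to obtain the integral representation
\begin{equation*}
    h^{\alpha - 1}(\Delta_h^{1 - \alpha}y)(t) = \int_{t_0}^{t} \kappa_h(t, \tau)\, \dot{y}(\tau)\, d\tau,
\end{equation*}
where $\kappa_h(t, \tau) = h^{\alpha - 1}\sum_{i=0}^{\lfloor(t - \tau)/h\rfloor} (-1)^i\binom{1 - \alpha}{i}$. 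The generating-function identity $(1 - z)^{1 - \alpha}(1 - z)^{-1} = (1 - z)^{-\alpha}$ identifies the partial sum with $\Gamma(k + \alpha)/(\Gamma(\alpha)\Gamma(k + 1))$ for $k = \lfloor(t - \tau)/h\rfloor$, which is positive and of order $k^{\alpha - 1}/\Gamma(\alpha)$, so $\kappa_h \geq 0$ and $\int_{t_0}^{t}\kappa_h(t, \tau)\, d\tau$ is bounded uniformly in $h$ by a constant depending only on $\alpha$ and $\vartheta - t_0$.

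Substituting this representation back into the growth estimate from $(A.3)$, one obtains an inequality of the form
\begin{equation*}
    \|\dot{y}(t)\| \leq (1 + R_0)c + c\int_{t_0}^{t} \kappa_h(t, \tau)\,\|\dot{y}(\tau)\|\, d\tau, \quad t \in [t_\ast, \vartheta],
\end{equation*}
to which, after incorporating the $M_1$-bound on $[t_0, t_\ast]$, the fractional Bellman-Gronwall lemma invoked in the proof of Proposition~\ref{prop_G_properties} applies, yielding the desired $h$-uniform bound on $\|\dot{y}\|_\infty$ and hence the claim.

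The principal obstacle is the kernel analysis: establishing the non-negativity of $\kappa_h$ and, more importantly, its uniform-in-$h$ integral majorization by the continuous kernel $(t - \tau)^{\alpha - 1}/\Gamma(\alpha)$. This relies on the sign pattern of the generalized binomial coefficients $(-1)^i\binom{1 - \alpha}{i}$ for $\alpha \in (0, 1)$ and on sharp asymptotics of $\Gamma(k + \alpha)/\Gamma(k + 1)$, and is the computation carried out in \cite[Lemma~2]{Gomoyunov_2018} on which the statement relies.
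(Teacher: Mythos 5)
Your argument is correct and follows essentially the same route as the paper, which simply invokes the scheme of \cite[Lemma~2]{Gomoyunov_2018}: the summation-by-parts representation of $h^{\alpha-1}(\Delta_h^{1-\alpha}y)(t)$ through the partial sums $\sum_{i=0}^{k}(-1)^i\binom{1-\alpha}{i}=\Gamma(k+\alpha)/(\Gamma(\alpha)\Gamma(k+1))$, their domination by the continuous kernel $(t-\tau)^{\alpha-1}/\Gamma(\alpha)$, and the fractional Bellman--Gronwall lemma. The only point worth making explicit is that the Gronwall step requires the pointwise bound $\kappa_h(t,\tau)\le C_\alpha (t-\tau)^{\alpha-1}$ rather than merely the uniform boundedness of $\int_{t_0}^{t}\kappa_h(t,\tau)\,d\tau$; this does follow from the asymptotics of $\Gamma(k+\alpha)/\Gamma(k+1)$ that you quote, with $k=[(t-\tau)/h]$.
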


\subsection{The Value of the Approximating Game}
\label{subsec_AGV}

Let $w_0 \in B(R_0)$ and $h > 0$ be fixed.
Similarly to Sect.~\ref{subsec_NAS}, in the approximating differential game (\ref{system_y}), (\ref{quality_index_y}), one can consider non-anticipative strategies of the players and introduce the lower and upper game values, denoted respectively by $\valAQSF(t_\ast, r_\ast(\cdot))$ and $\valAQSS(t_\ast, r_\ast(\cdot)),$ $(t_\ast, r_\ast(\cdot)) \in \GA.$
From the results of \cite{Lukoyanov_2000,Lukoyanov_2003,Lukoyanov_2011} (see also \cite{Gomoyunov_Lukoyanov_Plaksin_2017}) it follows that, under conditions $(B.1)$--$(B.4),$ the approximating game has the value
\begin{equation*}
    \valA(t_\ast, r_\ast(\cdot)) = \valAQSF(t_\ast, r_\ast(\cdot)) = \valAQSS(t_\ast, r_\ast(\cdot)), \quad (t_\ast, r_\ast(\cdot)) \in \GA,
\end{equation*}
and, furthermore, this value can be guaranteed by the players if they use the positional strategies, described in the next section.

\subsection{Optimal Positional Strategies}

Let $w_0 \in B(R_0)$ and $h > 0$ be fixed.
In the approximating differential game (\ref{system}), (\ref{quality_index}), by the positional strategies $\SP_{w_0, h}$ and $\SQ_{w_0, h}$ of the players, we mean arbitrary functions
\begin{equation*}
    \begin{array}{l}
        \GA \times (0, \infty) \ni (t, r(\cdot), \varepsilon) \mapsto \SP_{w_0, h}(t, r(\cdot), \varepsilon) \in \USet, \\[0.5em]
        \GA \times (0, \infty) \ni (t, r(\cdot), \varepsilon) \mapsto \SQ_{w_0, h}(t, r(\cdot), \varepsilon) \in \VSet,
    \end{array}
\end{equation*}
where $\varepsilon$ is the accuracy parameter.

Let $(t_\ast, r_\ast(\cdot)) \in \GA,$ $\varepsilon > 0,$ and let
\begin{equation} \label{Delta}
    \Delta = \{\tau_j\}_{j = 1}^{k + 1},
    \quad \tau_1 = t_\ast, \quad \tau_{j+1} > \tau_j, \quad j \in \overline{1, k}, \quad \tau_{k+1} = \vartheta, \quad k \in \mathbb{N},
\end{equation}
be a partition of the interval $[t_\ast, \vartheta].$
The triple $\{\SP_{w_0, h}, \varepsilon, \Delta\}$ is called a control law of the first player.
This law forms in the approximating system a piecewise constant control realization $p(\cdot) \in \mathcal{U}(t_\ast, \vartheta)$ by the following step-by-step feedback rule:
\begin{equation} \label{control_law_1_ADG}
    p(t) = \SP_{w_0, h}(\tau_j, y_{\tau_j}(\cdot), \varepsilon),
    \quad t \in [\tau_j, \tau_{j + 1}), \quad j \in \overline{1, k},
\end{equation}
where $y_{\tau_1}(\cdot) = r_\ast(\cdot).$
Thus, from the initial position $(t_\ast, r_\ast(\cdot)),$ the control law of the first player $\{\SP_{w_0, h}, \varepsilon, \Delta\}$ together with a control realization of the second player $q(\cdot) \in \mathcal{V}(t_\ast, \vartheta)$ uniquely generate the motion $y(\cdot)$ of the approximating system and, therefore, determine the value $\gammaA$ of approximating quality index (\ref{quality_index_y}).

Similarly, we consider the control law of the second player $\{\SQ_{w_0, h}, \varepsilon, \Delta\},$ which forms a piecewise constant control realization $q(\cdot) \in \mathcal{V}(t_\ast, \vartheta)$ as follows:
\begin{equation} \label{control_law_2_ADG}
    q(t) = \SQ_{w_0, h}(\tau_j, y_{\tau_j}(\cdot), \varepsilon),
    \quad t \in [\tau_j, \tau_{j + 1}), \quad j \in \overline{1, k}.
\end{equation}
From the initial position $(t_\ast, r_\ast(\cdot)),$ the control law $\{\SQ_{w_0, h}, \varepsilon, \Delta\}$ together with $p(\cdot) \in \mathcal{U}(t_\ast, \vartheta)$ uniquely generate the motion $y(\cdot)$ of the approximating system and determine the value $\gammaA$ of approximating quality index (\ref{quality_index_y}).

By the scheme from \cite[Theorem~1]{Lukoyanov_2000} (see also \cite[Theorem~17.1]{Lukoyanov_2011}), one can prove the following lemma (see \cite{Gomoyunov_Lukoyanov_Plaksin_2017} for a related technique).
\begin{lemma} \label{lem_optimal_strategies}
    For any $w_0 \in B(R_0)$ and any $h > 0,$ in the approximating differential game $(\ref{system_y}),$ $(\ref{quality_index_y}),$ there exist the players' optimal positional strategies $\SPO$ and $\SQO$ that are optimal uniformly in $(t_\ast, r_\ast(\cdot)) \in \GA$ and $w_0 \in B(R_0).$
    Namely, for any $h > 0$ and any $\zeta > 0,$ one can choose $\varepsilon^{(1)} = \varepsilon^{(1)}(h, \zeta) > 0$ and $\delta^{(1)}(\varepsilon) = \delta^{(1)}(\varepsilon, h, \zeta) > 0,$ $\varepsilon \in (0, \varepsilon^{(1)}],$ such that the following statement holds.
    Let $w_0 \in B(R_0),$ $(t_\ast, r_\ast(\cdot)) \in \GA,$ $\varepsilon \in (0, \varepsilon^{(1)}],$ and let $\Delta$ be a partition $(\ref{Delta})$ with the diameter $\diam(\Delta) = \max_{j \in \overline{1, k}} (\tau_{j+1} - \tau_j) \leq \delta^{(1)}(\varepsilon).$
    Then the control law $\{\SPO, \varepsilon, \Delta\}$ of the first player guarantees for the value $\gammaA$ of approximating quality index $(\ref{quality_index_y})$ the inequality
    \begin{equation*}
        \gammaA \leq \valA(t_\ast, r_\ast(\cdot)) + \zeta
    \end{equation*}
    for any control realization of the second player $q(\cdot) \in \mathcal{V}(t_\ast, \vartheta);$
    and the control law $\{\SQO, \varepsilon, \Delta\}$ of the second player guarantees for the value $\gammaA$ of approximating quality index $(\ref{quality_index_y})$ the inequality
    \begin{equation*}
        \gammaA \geq \valA(t_\ast, r_\ast(\cdot)) - \zeta
    \end{equation*}
    for any control realization of the first player $p(\cdot) \in \mathcal{U}(t_\ast, \vartheta).$
\end{lemma}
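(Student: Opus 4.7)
The plan is to recognize that, with the fractional-difference operator (\ref{fractional_difference}) in place of the fractional derivative, Eq.~(\ref{system_y}) is a genuine retarded-type functional differential equation, so the Krasovskii--Subbotin positional-strategy framework developed in \cite{Lukoyanov_2000,Lukoyanov_2003,Lukoyanov_2011} applies almost verbatim. Conditions $(B.1)$--$(B.4)$ mirror the standard assumptions of that framework, so the existence of the value $\valA$, its continuity on the compact metric subset $\GA \subset (G^0, \dist)$, and the existence of players' optimal positional strategies realized via the extremal shift rule all follow from those sources; what remains is to produce strategies $\SPO, \SQO$ and accuracy parameters $\varepsilon^{(1)}, \delta^{(1)}$ that work uniformly over $w_0 \in B(R_0)$ and $(t_\ast, r_\ast(\cdot)) \in \GA$.

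First I would recall from \cite{Lukoyanov_2003,Lukoyanov_2011} the construction of $\SPO$ through extremal shift: at an admissible position $(t, r(\cdot)) \in \GA$ with accuracy $\varepsilon$, one picks a companion position $(t, r^\ast(\cdot)) \in \GA$ from a finite $\varepsilon$-net of $\GA$ by a Krasovskii-type aiming rule based on $\valA$, and then sets
\begin{equation*}
    \SPO(t, r(\cdot), \varepsilon) \in \argmin{p \in \USet} \max_{q \in \VSet} \langle s^\ast, \fA(t, r(\cdot), p, q) \rangle,
\end{equation*}
where $s^\ast$ is the aiming direction derived from $r(t) - r^\ast(t)$. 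The strategy $\SQO$ is defined symmetrically using $\argmax{q \in \VSet} \min_{p \in \USet}$; the Isaacs condition $(B.4)$ is precisely what makes these two choices compatible.

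Next I would carry out the standard Lyapunov-type estimate. Along the piecewise-constant control law $\{\SPO, \varepsilon, \Delta\}$ played against an arbitrary $q(\cdot) \in \mathcal{V}(t_\ast, \vartheta)$, the quantity $t \mapsto \|y(t) - y^\ast(t)\|^2 + 2(\valA(t, y_t(\cdot)) - \valA(t, y^\ast_t(\cdot)))$ satisfies a differential inequality whose right-hand side is controlled by $\lambda_h, c_h$ from $(B.2), (B.3)$ and by the modulus of continuity of $\valA$; here $y^\ast(\cdot)$ is the companion motion generated by the model. Integration over $[t_\ast, \vartheta]$ combined with the continuity of $\sigmaA$ and $\valA$ yields $\gammaA \leq \valA(t_\ast, r_\ast(\cdot)) + \zeta$ once $\varepsilon \leq \varepsilon^{(1)}(h, \zeta)$ and $\diam(\Delta) \leq \delta^{(1)}(\varepsilon, h, \zeta)$; the symmetric argument produces the lower bound via $\SQO$.

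The hard part will be the uniformity in $w_0 \in B(R_0)$ and $(t_\ast, r_\ast(\cdot)) \in \GA$. The key observation is that every estimate in $(B.1)$--$(B.4)$ is already stated uniformly in $w_0$, and Proposition~\ref{prop_L} provides a Lipschitz constant $L_1$ for every generated motion that is independent of $w_0$ and of the initial position. Consequently every motion that can arise lives in the single compact set $\Lip^0_{L_1}([t_0, \vartheta], \mathbb{R}^n) \cap \GA$, on which the Hamiltonians are equicontinuous in $w_0$ and the family $\{\valA\}_{w_0 \in B(R_0)}$ shares a common modulus of continuity in the $\dist$-metric. Therefore the constants $\varepsilon^{(1)}$ and $\delta^{(1)}$ emerging from the Lyapunov estimate can be chosen to depend only on $h$ and $\zeta$; the careful bookkeeping required to track this uniformity through the extremal-shift construction is the main source of technical difficulty, and is precisely the scheme implemented in \cite{Gomoyunov_Lukoyanov_Plaksin_2017}.
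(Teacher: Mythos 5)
Your proposal follows essentially the same route as the paper, which likewise proves the lemma by invoking the scheme of \cite{Lukoyanov_2000,Lukoyanov_2003,Lukoyanov_2011} for retarded-type systems under conditions $(B.1)$--$(B.4)$, constructs $\SPO$ and $\SQO$ by extremal shift to accompanying points chosen via $\valA$ over an $\varepsilon$-neighborhood in $\GA$, and obtains uniformity in $w_0 \in B(R_0)$ from the uniformity built into $(B.1)$--$(B.3)$ together with the compactness of $\GA$. The minor variations (a finite $\varepsilon$-net versus an argmin/argmax over a neighborhood, and your explicit Lyapunov functional) stay within the same standard scheme, so no further comparison is needed.
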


Note that the uniformness in the parameter $w_0 \in B(R_0)$ is provided by the corresponding uniformness in properties ($B.1$)--($B.3$).

Let us describe shortly one of the ways of constructing such optimal strategies $\SPO$ and $\SQO.$
We apply the method of extremal shift to accompanying points (see, e.g., \cite{Krasovskii_Krasovskii_1995,Krasovskii_1985} and also \cite{Lukoyanov_2000,Lukoyanov_2011}).
For simplicity of the notation below, it is convenient to consider the so-called pre-strategies of the players in the approximating game (\ref{system_y}), (\ref{quality_index_y}).
Namely, by pre-strategies $\PSP_{w_0, h}$ and $\PSQ_{w_0, h}$ of the first and second players, we mean functions
\begin{equation*}
    \begin{array}{l}
        \GA \times \mathbb{R}^n \ni (t, r(\cdot), s) \mapsto \PSP_{w_0, h}(t, r(\cdot), s) \in \USet, \\[0.5em]
        \GA \times \mathbb{R}^n \ni (t, r(\cdot), s) \mapsto \PSQ_{w_0, h}(t, r(\cdot), s) \in \VSet
    \end{array}
\end{equation*}
that, for any $(t, r(\cdot)) \in \GA$ and any $s \in \mathbb{R}^n,$ satisfy the inclusions
\begin{equation*}
    \begin{array}{l}
        \displaystyle \PSP_{w_0, h}(t, r(\cdot), s) \in \argmin{p \in \USet} \max_{q \in \VSet}
        \langle s, \fA(t, r(\cdot), p, q) \rangle, \\[0.5em]
        \displaystyle \PSQ_{w_0, h}(t, r(\cdot), s) \in \argmax{q \in \VSet} \min_{p \in \USet}
        \langle s, \fA(t, r(\cdot), p, q\big) \rangle.
    \end{array}
\end{equation*}
Let $(t, r(\cdot)) \in \GA$ and $\varepsilon > 0.$ For the first and second players, we choose the accompanying points $r^{(p)}_\varepsilon(\cdot)$ and $r^{(q)}_\varepsilon(\cdot)$ from the conditions
\begin{equation*}
    r^{(p)}_\varepsilon(\cdot) \in \argmin{} \valA(t, r_\varepsilon(\cdot)), \quad
    r^{(q)}_\varepsilon(\cdot) \in \argmax{} \valA(t, r_\varepsilon(\cdot)),
\end{equation*}
where the minimum and maximum are calculated over the functions $r_\varepsilon(\cdot)$ such that
\begin{equation*}
    (t, r_\varepsilon(\cdot)) \in \GA, \quad
    \max_{\tau \in [t_0, t]} \|r(\tau) - r_\varepsilon(\tau)\| \leq (t - t_0) e^{(t - t_0) \lambda_h} \varepsilon,
\end{equation*}
and the constant $\lambda_h$ is chosen by the set $G_h^0$ in accordance with property ($B.2$).
Note that the minimum and maximum are attained due to continuity of the value function $\GA \ni (t_\ast, r_\ast(\cdot)) \mapsto \valA(t_\ast, r_\ast(\cdot)) \in \mathbb{R}.$ 
After that, we define
\begin{equation*}
    \begin{array}{l}
        \SPO(t, r(\cdot), \varepsilon) = \PSP_{w_0, h}(t, r(\cdot), r(t) - r^{(p)}_\varepsilon(t)), \\[0.5em]
        \SQO(t, r(\cdot), \varepsilon) = \PSQ_{w_0, h}(t, r(\cdot), r^{(q)}_\varepsilon(t) - r(t)).
    \end{array}
\end{equation*}

\begin{remark} \label{rem_methods}
    There are another methods for constructing the optimal positional strategies $\SPO$ and $\SQO$ (see, e.g., \cite{Krasovskii_Subbotin_1988,Lukoyanov_2000,Lukoyanov_2003,Lukoyanov_2011}).
    For example, if the value function $\valA$ is coinvariantly smooth, then the method of extremal shift in the direction of the coinvariant gradient of $\valA$ can be applied.
    In the general non-smooth case, such strategies can be constructed by the extremal shift in direction of the coinvariant gradient of a suitable coinvariantly smooth auxiliary function.
    Also, one can use the methods based on the notions of maximal $u$- and $v$-stable bridges.
    Furthermore, there are some specific methods for constructing the optimal strategies in the linear case (see, e.g., \cite{Gomoyunov_Lukoyanov_2012,Lukoyanov_Reshetova_1998}).
\end{remark}

\section{Players' Control Procedures with a Guide}
\label{sec_CPG}

In this section, we propose the players' feedback control procedures that use the optimally controlled approximating system (\ref{system_y}) as a guide.
It allows us to show that the values of the approximating differential games (\ref{system_y}), (\ref{quality_index_y}) have the limit when $h \downarrow 0.$
This fact constitutes the basis of the proof of the main result of the paper formulated in Theorem~\ref{thm_Existence} (see Sect.~\ref{sec_Proof}).

\subsection{Mutual Aiming Procedures between the Systems}
\label{subsec_Pcocedure}

According to \cite[Sect.~7]{Gomoyunov_2018}, let us consider the following mutual aiming procedure between original (\ref{system}) and approximating (\ref{system_y}) systems.
First of all, let us introduce pre-strategies of the players in the original game (\ref{system}), (\ref{quality_index}).
By pre-strategies $\PSU$ and $\PSV$ of the first and second players, we mean functions
\begin{equation*}
    \begin{array}{l}
        [t_0, \vartheta] \times \mathbb{R}^n \times \mathbb{R}^n \ni (t, x, s) \mapsto \PSU(t, x, s) \in \USet, \\[0.5em]
        [t_0, \vartheta] \times \mathbb{R}^n \times \mathbb{R}^n \ni (t, x, s) \mapsto \PSV(t, x, s) \in \VSet
    \end{array}
\end{equation*}
that, for any $t \in [t_0, \vartheta]$ and any $x, s \in \mathbb{R}^n,$ satisfy the inclusions
\begin{equation*}
    \begin{array}{l}
        \displaystyle \PSU(t, x, s) \in \argmin{u \in \USet} \max_{v \in \VSet} \langle s, f(t, x, u, v) \rangle, \\[0.5em]
        \displaystyle \PSV(t, x, s) \in \argmax{v \in \VSet} \min_{u \in \USet} \langle s, f(t, x, u, v) \rangle.
    \end{array}
\end{equation*}
Further, for $(t, w(\cdot)) \in \G$ and $(t, r(\cdot)) \in \GA,$ let us denote
\begin{equation*}
    s(t, w(\cdot), r(\cdot)) = w(t) - w(t_0) - h^{\alpha - 1} (\Delta_h^{1 - \alpha} r) (t).
\end{equation*}

Let $(t_\ast, w_\ast(\cdot)) \in \G$ be an initial position of original system (\ref{system}).
Let us fix $h > 0,$ put $w_0 = w_\ast(t_0),$ and consider the corresponding approximating system (\ref{system_y}) under the initial position $(t_\ast, r_\ast(\cdot))$ defined by (\ref{r_ast}).
Let us fix also a partition $\Delta$ (\ref{Delta}).
Let a first player's control realization $u(\cdot) \in \mathcal{U}(t_\ast, \vartheta)$ in the original system and a second player's control realization $q(\cdot) \in \mathcal{V}(t_\ast, \vartheta)$ in the approximating system be formed simultaneously according to the following step-by-step feedback rule:
\begin{equation} \label{procedure_1}
    u(t) =  \PSU(\tau_j, x(\tau_j), s_j), \quad q(t) = \PSQ_{w_0, h}(\tau_j, y_{\tau_j}(\cdot), s_j),  \quad t \in [\tau_j, \tau_{j+1}),
\end{equation}
where
\begin{equation} \label{s_j_1}
    s_j = s(\tau_j, x_{\tau_j}(\cdot), y_{\tau_j}(\cdot)),
\end{equation}
and $\PSQ_{w_0, h}$ is a pre-strategy of the second player in the approximating game.

\begin{lemma} \label{lem_procedure_1}
    For any $\xi > 0,$ there exist $h^{(2)} = h^{(2)}(\xi) > 0$ and $\delta^{(2)} = \delta^{(2)}(\xi) >0$ such that, for any initial position $(t_\ast, w_\ast(\cdot)) \in \G$ of original system $(\ref{system})$ and any partition $\Delta$ $(\ref{Delta})$ with the diameter $\diam(\Delta) \leq \delta^{(2)},$ the following statement is valid.
    Let us consider approximating system $(\ref{system_y})$ for $w_0 = w_\ast(t_0)$ and $h \in (0, h^{(2)}]$ under the initial position $(t_\ast, r_\ast(\cdot))$ defined by $(\ref{r_ast}).$
    Then, for any control realizations $v(\cdot) \in \mathcal{V}(t_\ast, \vartheta)$ and $p(\cdot) \in \mathcal{U}(t_\ast, \vartheta),$ if control realizations $u(\cdot) \in \mathcal{U}(t_\ast, \vartheta)$ and $q(\cdot) \in \mathcal{V}(t_\ast, \vartheta)$ are formed according to the mutual aiming procedure $(\ref{procedure_1}),$ $(\ref{s_j_1}),$ then the corresponding motions $x(\cdot)$ and $y(\cdot)$ of the original and approximating systems satisfy the inequality
    \begin{equation} \label{lem_procedure_1_main}
        \|x(t) - w_0 - h^{\alpha - 1} (\Delta_h^{1 - \alpha} y) (t)\| \leq \xi, \quad t \in [t_0, \vartheta].
    \end{equation}
\end{lemma}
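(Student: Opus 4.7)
The plan is to follow the mutual-aiming scheme of \cite[Sect.~7]{Gomoyunov_2018}. Put $s(t) := x(t) - w_0 - h^{\alpha-1}(\Delta_h^{1-\alpha} y)(t)$ on $[t_0, \vartheta]$, so the goal is $\|s(t)\| \leq \xi$ throughout. On the prehistory $[t_0, t_\ast]$ I will use that $y(t) = r_\ast(t) = (I^{1-\alpha}(w_\ast - w_0))(t)$ and $x(t) = w_\ast(t)$ to rewrite $s(t) = (D^{1-\alpha} r_\ast)(t) - h^{\alpha-1}(\Delta_h^{1-\alpha} r_\ast)(t)$ as the Gr\"{u}nwald--Letnikov truncation error for the R.-L.\ derivative of a function of class $\Lip_{M_1}^0$. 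Classical uniform bounds for this difference scheme on Lipschitz functions then give $\sup_{t \in [t_0, t_\ast]} \|s(t)\| \leq \eta(h)$ with $\eta(h) \downarrow 0$ as $h \downarrow 0$, uniformly in $(t_\ast, w_\ast(\cdot)) \in \G$ because the constant $M_1$ in Proposition~\ref{prop_G_properties} is shared across all admissible initial positions.

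For $t \in [\tau_j, \tau_{j+1})$, I will set $s_j := s(\tau_j)$ and note the crucial algebraic identity $\hat{x}_j := w_0 + h^{\alpha-1}(\Delta_h^{1-\alpha} y)(\tau_j) = x(\tau_j) - s_j$, so that $\fA(\tau_j, y_{\tau_j}(\cdot), p, q) = f(\tau_j, \hat{x}_j, p, q)$. The defining inclusion for $\PSU(\tau_j, x(\tau_j), s_j)$ yields $\langle s_j, f(\tau_j, x(\tau_j), u(t), v)\rangle \leq \min_{u'}\max_{v'} \langle s_j, f(\tau_j, x(\tau_j), u', v')\rangle$ for every $v \in \VSet$, while the inclusion for $\PSQ_{w_0, h}(\tau_j, y_{\tau_j}(\cdot), s_j)$ gives $\langle s_j, f(\tau_j, \hat{x}_j, p, q(t))\rangle \geq \max_{q'}\min_{p'}\langle s_j, f(\tau_j, \hat{x}_j, p', q')\rangle$ for every $p \in \USet$. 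Combining these with the Isaacs conditions $(A.4)$ and $(B.4)$ and the Lipschitz constant $\lambda_1$ of $f$ in $x$ on the ball $B(R_1)$ (from $(A.2)$) produces the key one-step saddle-type inequality
\begin{equation*}
    \langle s_j,\ f(\tau_j, x(\tau_j), u(t), v) - f(\tau_j, \hat{x}_j, p, q(t)) \rangle \leq \lambda_1 \|s_j\|^2
    \qquad \forall\, v \in \VSet,\ p \in \USet,
\end{equation*}
which is the contraction-type estimate that drives the rest of the argument.

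To turn this into a continuous-time bound, I would apply (\ref{representation_formula}) to the ``exact'' function $y_{\mathrm{ex}}(t) := (I^{1-\alpha}(x(\cdot) - w_0))(t)$, yielding $x(t) - w_0 = (D^{1-\alpha} y_{\mathrm{ex}})(t)$ and $\dot{y}_{\mathrm{ex}}(t) = f(t, x(t), u(t), v(t))$ for almost every $t \in [t_\ast, \vartheta]$. Decomposing
\begin{equation*}
    s(t) = \bigl[(D^{1-\alpha} y_{\mathrm{ex}})(t) - h^{\alpha-1}(\Delta_h^{1-\alpha} y_{\mathrm{ex}})(t)\bigr] + h^{\alpha-1}\bigl(\Delta_h^{1-\alpha}(y_{\mathrm{ex}} - y)\bigr)(t)
\end{equation*}
isolates a pure Gr\"{u}nwald--Letnikov discretization error (of size $\eta(h)$, since $\dot{y}_{\mathrm{ex}}$ is uniformly bounded via Proposition~\ref{prop_G_properties}) from a memory contribution linear in $y_{\mathrm{ex}} - y$. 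Substituting $\dot{y}(\tau) = f(\tau, \hat{x}(\tau), p(\tau), q(\tau))$, computing $\tfrac{d}{dt}\|s(t)\|^2$ almost everywhere on each $(\tau_j, \tau_{j+1})$, replacing $(\tau, x(\tau))$ by $(\tau_j, x(\tau_j))$ modulo a modulus-of-continuity error controlled via Corollary~\ref{cor_motion_properties} and $(A.1)$, and invoking the one-step estimate above, I expect to arrive at
\begin{equation*}
    \|s(t)\|^2 \leq \|s(t_\ast)\|^2 + 2\lambda_1 \int_{t_\ast}^{t} \|s(\tau)\|^2 \, d\tau + C\bigl(\eta(h) + \omega_f(\diam(\Delta))\bigr),
\end{equation*}
where $\omega_f$ is a modulus of continuity of $f$ on the compact set furnished by Corollary~\ref{cor_motion_properties} and Proposition~\ref{prop_L}. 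Gronwall's lemma then delivers $\|s(t)\| \leq \xi$ on $[t_\ast, \vartheta]$ once $h^{(2)}$ and $\delta^{(2)}$ are chosen small.

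The main obstacle will be the third step: the memory term $h^{\alpha-1}(\Delta_h^{1-\alpha}(y_{\mathrm{ex}} - y))(t)$ couples the current discrepancy to the past differences $y_{\mathrm{ex}}(t - ih) - y(t - ih)$ through a long history of coefficients $(-1)^i h^{\alpha-1}\binom{1-\alpha}{i}$ that do \emph{not} behave like a standard Riemann sum, and translating ``$y_{\mathrm{ex}} - y$ is small'' into ``$s$ is small'' requires the combinatorial manipulation and the commutation identities between $I^{1-\alpha}$ and the finite-difference operator $\Delta_h^{1-\alpha}$ developed in \cite{Gomoyunov_2018}. Once that technical bridge is in place, the remainder is a routine Gronwall argument combined with the initial estimate of the first step.
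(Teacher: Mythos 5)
The paper does not actually write out a proof of this lemma; it only points to the scheme of \cite[Theorem~3]{Gomoyunov_2018}, noting that the constants $R_1,$ $H_1$ and $L_1$ are uniform over $\G.$ Your reconstruction gets the two genuinely game-theoretic ingredients of that scheme right. The prehistory estimate is correct: on $[t_0,t_\ast]$ one has $s(t)=(D^{1-\alpha}r_\ast)(t)-h^{\alpha-1}(\Delta_h^{1-\alpha}r_\ast)(t),$ the Gr\"unwald--Letnikov truncation error of a function in $\Lip^0_{M_1}([t_0,t_\ast],\mathbb{R}^n),$ hence $O(\eta(h))$ uniformly over $\G.$ The one-step extremal-shift inequality $\langle s_j,\,f(\tau_j,x(\tau_j),u(t),v)-f(\tau_j,\hat x_j,p,q(t))\rangle\le\lambda\|s_j\|^2$ also follows exactly as you say from the two defining inclusions for $\PSU$ and $\PSQ_{w_0,h},$ conditions $(A.4),$ $(B.4)$ and $(A.2)$ --- with the minor caveat that $\hat x_j$ need not lie in $B(R_1),$ so the Lipschitz constant must be taken on $B(\max\{R_1,R_0+R_2\}),$ where $R_2$ is the uniform bound on $h^{\alpha-1}(\Delta_h^{1-\alpha}y)$ furnished by Proposition~\ref{prop_L}.

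The gap is in your third step, and it is larger than the ``memory term'' obstacle you flag at the end. You propose to compute $\frac{d}{dt}\|s(t)\|^2$ almost everywhere and close with the classical Gronwall lemma, but $s(t)$ contains $x(t),$ a solution of a Caputo equation of order $\alpha<1,$ which by (\ref{H_infty}) is in general only H\"older-$\alpha$ continuous and not differentiable: $\dot s(t)$ does not exist, so there is no classical energy identity to integrate. Moreover, even formally, the difference of the two right-hand sides feeds into $s(t)$ through the weakly singular kernel $(t-\tau)^{\alpha-1}/\Gamma(\alpha)$ (equivalently, through the slowly decaying weights $h^{\alpha-1}\binom{1-\alpha}{i}$), so the accumulated error is of the form $\int_{t_\ast}^{t}(t-\tau)^{\alpha-1}\|s(\tau)\|^2\,d\tau$ rather than $\int_{t_\ast}^{t}\|s(\tau)\|^2\,d\tau,$ and your displayed integral inequality is not the one that arises. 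What is needed instead is the fractional-order machinery from the cited references: compare $x(t)-w_0$ with $(D^{1-\alpha}y)(t),$ both genuine fractional integrals of order $\alpha$ of the respective right-hand sides, using an estimate for the Caputo derivative of the convex functional $\|\cdot\|^2$ (see \cite{Gomoyunov_2017}) together with the fractional Bellman--Gronwall lemma already used in Proposition~\ref{prop_G_properties}; the remaining discrepancy $h^{\alpha-1}(\Delta_h^{1-\alpha}y)(t)-(D^{1-\alpha}y)(t)$ is then a uniform $O(\eta(h))$ discretization error because $y(\cdot)\in\Lip^0_{L_1}([t_0,\vartheta],\mathbb{R}^n).$ Your alternative route through $y_{\mathrm{ex}}-y$ does not avoid this: as you yourself observe, smallness of $\|y_{\mathrm{ex}}-y\|_\infty$ does not control $h^{\alpha-1}(\Delta_h^{1-\alpha}(y_{\mathrm{ex}}-y))$ because of the factor $h^{\alpha-1}.$ Until the classical $\frac{d}{dt}\|s\|^2$/Gronwall step is replaced by this fractional (or equivalent discrete, partition-point) argument, the proof does not close; the rest of your outline is sound.
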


The lemma is proved by the scheme from \cite[Theorem~3]{Gomoyunov_2018}, if we take into account that the constants $R_1$ and $H_1$ in Corollary~\ref{cor_motion_properties} and the constant $L_1$ in Proposition~\ref{prop_L} do not depend on an initial position $(t_\ast, w_\ast(\cdot)) \in \G.$

Similarly, one can consider another mutual aiming procedure between the original and approximating systems.
Namely, let $v(\cdot) \in \mathcal{V}(t_\ast, \vartheta)$ and $p(\cdot) \in \mathcal{U}(t_\ast, \vartheta)$ be formed on the basis of the partition $\Delta$ as follows:
\begin{equation} \label{procedure_2}
    v(t) =  \PSV(\tau_j, x(\tau_j), s_j), \quad p(t) = \PSP_{w_0, h}(\tau_j, y_{\tau_j}(\cdot), s_j),  \quad t \in [\tau_j, \tau_{j+1}),
\end{equation}
where
\begin{equation} \label{s_j_2}
    s_j = - s(\tau_j, x_{\tau_j}(\cdot), y_{\tau_j}(\cdot)).
\end{equation}
By analogy with Lemma~\ref{lem_procedure_1}, we obtain the following result.

\begin{lemma} \label{lem_procedure_2}
    For any $\xi > 0,$ there exist $h^{(3)} = h^{(3)}(\xi) > 0$ and $\delta^{(3)} = \delta^{(3)}(\xi) >0$ such that, for any initial position $(t_\ast, w_\ast(\cdot)) \in \G$ of original system $(\ref{system})$ and any partition $\Delta$ $(\ref{Delta})$ with the diameter $\diam(\Delta) \leq \delta^{(3)},$ the following statement is valid.
    Let us consider approximating system $(\ref{system_y})$ for $w_0 = w_\ast(t_0)$ and $h \in (0, h^{(3)}]$ under the initial position $(t_\ast, r_\ast(\cdot))$ defined by $(\ref{r_ast}).$
    Then, for any realizations $u(\cdot) \in \mathcal{U}(t_\ast, \vartheta)$ and $q(\cdot) \in \mathcal{V}(t_\ast, \vartheta),$ if realizations $v(\cdot) \in \mathcal{V}(t_\ast, \vartheta)$ and $p(\cdot) \in \mathcal{U}(t_\ast, \vartheta)$ are formed according to the mutual aiming procedure $(\ref{procedure_2}),$ $(\ref{s_j_2}),$ then the corresponding motions $x(\cdot)$ and $y(\cdot)$ of the original and approximating systems satisfy inequality $(\ref{lem_procedure_1_main}).$
\end{lemma}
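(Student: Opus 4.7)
The plan is to mirror the proof of Lemma~\ref{lem_procedure_1}, which itself follows \cite[Theorem~3]{Gomoyunov_2018}, and to exploit the symmetry between the two mutual aiming procedures: they differ only in which player is aimed in each system and in the sign of $s_j,$ while the Isaacs conditions $(A.4)$ and $(B.4)$ are insensitive to replacing the aiming vector by its negative. Throughout, I denote the discrepancy
\begin{equation*}
\psi(t) = x(t) - w_0 - h^{\alpha - 1} (\Delta_h^{1 - \alpha} y)(t), \quad t \in [t_0, \vartheta],
\end{equation*}
and estimate $\|\psi(t)\|^2$ uniformly in the initial position.

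First, I would invoke the uniform a priori bounds that make the estimate independent of $(t_\ast, w_\ast(\cdot)) \in \G:$ Corollary~\ref{cor_motion_properties} controls $\|x(\cdot)\|_\infty$ and the H\"{o}lder modulus of $x(\cdot),$ while Proposition~\ref{prop_L} yields $y(\cdot) \in \Lip_{L_1}^0([t_0, \vartheta], \mathbb{R}^n)$ with $L_1$ independent of $(t_\ast, w_\ast(\cdot)),$ $\Delta,$ and of the control realizations in the approximating system. As in \cite[Lemma~1]{Gomoyunov_2018}, these bounds imply that $\|\psi(t_\ast)\| \to 0$ as $h \downarrow 0$ uniformly in the initial position.

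Next, on each mesh cell $[\tau_j, \tau_{j+1})$ I would expand $\|\psi(\tau_{j+1})\|^2 - \|\psi(\tau_j)\|^2$ via $2 \langle \psi(\tau_j), \psi(\tau_{j+1}) - \psi(\tau_j) \rangle$ plus a quadratic remainder of order $O((\tau_{j+1} - \tau_j)^{1 + \alpha})$ from the H\"{o}lder estimates. Inserting $\dot{x}(t) = f(t, x(t), u(t), v(t))$ and the pointwise derivative of $h^{\alpha - 1} (\Delta_h^{1 - \alpha} y)(t),$ the cross term reduces, modulo errors uniform in $h$ and $\diam(\Delta),$ to $\langle \psi(\tau_j), f(\tau_j, x(\tau_j), u(\tau_j), v(\tau_j)) - \fA(\tau_j, y_{\tau_j}(\cdot), p(\tau_j), q(\tau_j)) \rangle.$ With $s_j = -\psi(\tau_j) + o(1)$ as $h, \diam(\Delta) \downarrow 0,$ procedure \eqref{procedure_2}, \eqref{s_j_2} forces $v(\tau_j)$ to realize the outer maximum in $(A.4)$ with the sign vector $s_j,$ while $p(\tau_j)$ realizes the outer minimum in $(B.4)$ with the same $s_j;$ the saddle point equalities then yield $\langle \psi(\tau_j), f - \fA \rangle \leq C \|\psi(\tau_j)\| \, \omega(h, \diam(\Delta))$ with $\omega \to 0$ and $C$ independent of $(t_\ast, w_\ast(\cdot)).$

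Summing over $j$ and applying a discrete Gronwall step yields inequality~\eqref{lem_procedure_1_main} for all $t \in [t_0, \vartheta],$ provided $h \leq h^{(3)}(\xi)$ and $\diam(\Delta) \leq \delta^{(3)}(\xi)$ are chosen small enough. The main obstacle, identical to the one confronted in \cite[Theorem~3]{Gomoyunov_2018} and in the proof of Lemma~\ref{lem_procedure_1}, is the delicate treatment of the derivative of $h^{\alpha - 1} (\Delta_h^{1 - \alpha} y)(t),$ which is a weighted sum of values $\dot{y}(t - ih) = \fA(t - ih, y_{t - ih}(\cdot), p(t - ih), q(t - ih))$ and must be related, uniformly in $h$ and in the initial position, to $f$ evaluated at the current phase. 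Once this step is reproduced verbatim with $s_j$ replaced by $-s_j,$ no additional analytic content is required, since the Isaacs conditions are symmetric under negation of the aiming vector.
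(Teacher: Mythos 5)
Your overall strategy coincides with the paper's: the paper gives no independent proof of this lemma, stating only that it is obtained by analogy with Lemma~\ref{lem_procedure_1}, which is in turn proved ``by the scheme from \cite[Theorem~3]{Gomoyunov_2018}'' together with the observation that the constants $R_1,$ $H_1$ from Corollary~\ref{cor_motion_properties} and $L_1$ from Proposition~\ref{prop_L} do not depend on the initial position. Your two key points --- the symmetry of the saddle point conditions $(A.4),$ $(B.4)$ under replacing the aiming vector by its negative, and the uniformity of the a priori bounds --- are exactly what the paper relies on.

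However, the mechanics you sketch for the deferred step would fail as written. The original system is $({}^C D^\alpha x)(t) = f(t, x(t), u(t), v(t))$ with $\alpha \in (0,1),$ so $x(\cdot)$ is only H\"older continuous of exponent $\alpha$ (see (\ref{H_infty}) and Corollary~\ref{cor_motion_properties}) and in general not differentiable; the identity $\dot{x}(t) = f(t,x(t),u(t),v(t))$ that you insert is not available. Correspondingly, the remainder in the expansion of $\|\psi(\tau_{j+1})\|^2 - \|\psi(\tau_j)\|^2$ is $\|\psi(\tau_{j+1}) - \psi(\tau_j)\|^2 = O\big((\tau_{j+1} - \tau_j)^{2\alpha}\big),$ not $O\big((\tau_{j+1}-\tau_j)^{1+\alpha}\big);$ summing over the partition gives a contribution of order $\diam(\Delta)^{2\alpha - 1},$ which does not vanish for $\alpha \leq 1/2$ and in fact diverges for $\alpha < 1/2.$ Hence the discrete Gronwall argument on $\|\psi\|^2$ cannot be run directly at the level of the state variable $x.$ The scheme of \cite[Theorem~3]{Gomoyunov_2018} has to work instead with the Lipschitz objects $y(\cdot)$ and $(I^{1-\alpha}(x(\cdot) - w_\ast(t_0)))(\cdot),$ whose derivatives are $\fA$ and $f$ evaluated along the motions, or with a Lyapunov functional adapted to the fractional dynamics, while the aiming vector $s_j$ remains the state-level discrepancy. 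Since you explicitly defer to reproducing that scheme verbatim with $s_j$ replaced by $-s_j,$ the conclusion stands and matches the paper, but the per-cell expansion you describe is not the one that works.
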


\subsection{First Player's Control Procedure with a Guide}
\label{subsec_Control_procedure_1}

Let $(t_\ast, w_\ast(\cdot)) \in \G,$ $h > 0,$ $\varepsilon > 0,$ and a partition $\Delta$ (\ref{Delta}) be fixed.
We propose the following control procedure of the first player in the original differential game (\ref{system}), (\ref{quality_index}).
Let us consider the approximating differential game (\ref{system_y}), (\ref{quality_index_y}) for $w_0 = w_\ast(t_0),$ the fixed $h,$ and with the initial position $(t_\ast, r_\ast(\cdot))$ defined by (\ref{r_ast}).
By the steps of the partition $\Delta,$ the first player forms a control realization $u(\cdot) \in \mathcal{U}(t_\ast, \vartheta)$ in the original system and, at the same time, control realizations $p(\cdot) \in \mathcal{U}(t_\ast, \vartheta)$ and $q(\cdot) \in \mathcal{V}(t_\ast, \vartheta)$ in the approximating system as follows:
$u(\cdot)$ and $q(\cdot)$ are formed according to the mutual aiming procedure (\ref{procedure_1}), (\ref{s_j_1}), and $p(\cdot)$ is formed by the control law $\{\SPO, \varepsilon, \Delta\}$ (see (\ref{control_law_1_ADG})) on the basis of the optimal strategy $\SPO$ taken from Lemma~\ref{lem_optimal_strategies}.
Note that, from the initial position $(t_\ast, w_\ast(\cdot)),$ the described control procedure together with $v(\cdot) \in \mathcal{V}(t_\ast, \vartheta)$ uniquely generate the motion $x(\cdot)$ of the original system and, therefore, determine the value $\gamma$ of quality index (\ref{quality_index}).
Moreover, during this control procedure, the first player generates the auxiliary motion $y(\cdot)$ of the approximating system, which can be considered as a guide (see, e.g., \cite[\S~8.2]{Krasovskii_Subbotin_1988}).
For convenience, in what follows, the described control procedure is referred as $\UPROC(t_\ast, w_\ast(\cdot), h, \varepsilon, \Delta).$

For any $h > 0,$ let us introduce the function
\begin{equation} \label{valh}
    \valh_h(t_\ast, w_\ast(\cdot)) = \valA(t_\ast, r_\ast(\cdot)), \quad (t_\ast, w_\ast(\cdot)) \in \G,
\end{equation}
where $r_\ast(\cdot)$ is defined according to (\ref{r_ast}), and $\valA(t_\ast, r_\ast(\cdot))$ is the value of the approximating differential game (\ref{system_y}), (\ref{quality_index_y}) for $w_0 = w_\ast(t_0)$ and the fixed $h.$

\begin{lemma} \label{lem_U_h_varepsilon}
    For any $\zeta > 0,$ there exist
    \begin{equation*}
        \begin{array}{l}
            h^{(4)} = h^{(4)}(\zeta) > 0, \\[0.5em]
            \varepsilon^{(4)}(h) = \varepsilon^{(4)}(h, \zeta) > 0, \quad h \in (0, h^{(4)}], \\[0.5em]
            \delta^{(4)}(\varepsilon, h) = \delta^{(4)}(\varepsilon, h, \zeta) > 0,
            \quad \varepsilon \in (0, \varepsilon^{(4)}(h)], \quad h \in (0, h^{(4)}],
        \end{array}
    \end{equation*}
    such that, for any $(t_\ast, w_\ast(\cdot)) \in \G,$ $h \in (0, h^{(4)}],$ $\varepsilon \in (0, \varepsilon^{(4)}(h)],$ and any partition $\Delta$ $(\ref{Delta})$ with the diameter $\diam(\Delta) \leq \delta^{(4)}(\varepsilon, h),$ the first player's control procedure with a guide $\UPROC(t_\ast, w_\ast(\cdot), h, \varepsilon, \Delta)$ guarantees for the value $\gamma$ of quality index $(\ref{quality_index})$ the inequality
    \begin{equation*}
        \gamma \leq \valh_h(t_\ast, w_\ast(\cdot)) + \zeta
    \end{equation*}
    for any control realization of the second player $v(\cdot) \in \mathcal{V}(t_\ast, \vartheta).$
\end{lemma}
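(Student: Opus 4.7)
The plan is to combine Lemma~\ref{lem_procedure_1}, which forces the reconstructed function $w_0 + h^{\alpha - 1}(\Delta_h^{1 - \alpha} y)(\cdot)$ to stay sup-norm close to the original motion $x(\cdot)$, with Lemma~\ref{lem_optimal_strategies}, which makes the approximating quality index $\gammaA$ nearly optimal when $p(\cdot)$ is generated by the strategy $\SPO$, and to transfer proximity of functions into proximity of $\sigma$-values via the continuity assumption $(A.5)$.

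Fix $\zeta > 0$. I first choose the closeness parameter $\xi_1$. By Corollary~\ref{cor_motion_properties}, every original motion $x(\cdot)$ arising from any initial position $(t_\ast, w_\ast(\cdot)) \in \G$ belongs to the H\"older class $K \subset \C([t_0, \vartheta], \mathbb{R}^n)$ determined by the $h$- and position-independent constants $R_1$ and $H_1$; by Arzel\`a--Ascoli, $K$ is compact. Covering $K$ by finitely many small balls on each of which the continuous function $\sigma$ oscillates by at most $\zeta/4$, I obtain $\xi_1 > 0$ such that $x(\cdot) \in K$ and $\|z(\cdot) - x(\cdot)\|_\infty \leq \xi_1$ imply $|\sigma(z(\cdot)) - \sigma(x(\cdot))| \leq \zeta/2$. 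Next, Lemma~\ref{lem_procedure_1} with $\xi = \xi_1$ supplies $h^{(2)}(\xi_1)$ and $\delta^{(2)}(\xi_1)$, while Lemma~\ref{lem_optimal_strategies} with accuracy $\zeta/2$ supplies $\varepsilon^{(1)}(h, \zeta/2)$ and $\delta^{(1)}(\varepsilon, h, \zeta/2)$. I then set
\[
h^{(4)} = h^{(2)}(\xi_1), \qquad \varepsilon^{(4)}(h) = \varepsilon^{(1)}(h, \zeta/2), \qquad \delta^{(4)}(\varepsilon, h) = \min\bigl\{\delta^{(2)}(\xi_1),\, \delta^{(1)}(\varepsilon, h, \zeta/2)\bigr\}.
\]

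For any admissible data and any $v(\cdot) \in \mathcal{V}(t_\ast, \vartheta)$, the procedure $\UPROC$ produces motions $(x(\cdot), y(\cdot))$ with $u(\cdot), q(\cdot)$ obeying the mutual aiming rule (\ref{procedure_1}), (\ref{s_j_1}) and $p(\cdot)$ generated by $\{\SPO, \varepsilon, \Delta\}$. Lemma~\ref{lem_procedure_1} yields (\ref{lem_procedure_1_main}) with $\xi = \xi_1$, hence $\|x(\cdot) - w_0 - h^{\alpha - 1}(\Delta_h^{1 - \alpha} y)(\cdot)\|_\infty \leq \xi_1$; since $x(\cdot) \in K$, the choice of $\xi_1$ gives
\[
\gamma = \sigma(x(\cdot)) \leq \sigma\bigl(w_0 + h^{\alpha - 1}(\Delta_h^{1 - \alpha} y)(\cdot)\bigr) + \zeta/2 = \gammaA + \zeta/2.
\]
On the other hand, Lemma~\ref{lem_optimal_strategies}, applied to the particular $q(\cdot)$ produced by the procedure, gives $\gammaA \leq \valA(t_\ast, r_\ast(\cdot)) + \zeta/2$, which by (\ref{valh}) equals $\valh_h(t_\ast, w_\ast(\cdot)) + \zeta/2$. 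Adding the two estimates yields the required inequality $\gamma \leq \valh_h(t_\ast, w_\ast(\cdot)) + \zeta$.

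The main obstacle is to guarantee that $\xi_1$ is chosen uniformly over all the data, in particular independently of $h \in (0, h^{(4)}]$ and of the initial position. This is precisely what is delivered by working on the single compact set $K$, whose defining constants $R_1, H_1$ from Proposition~\ref{prop_G_properties} are independent of all the data; once this uniformity is isolated, the rest of the argument is assembly of the two preceding lemmas.
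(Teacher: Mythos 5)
Your proof is correct and follows essentially the same route as the paper: choose a closeness threshold from the uniform continuity of $\sigma$ over a compact family of motions, invoke Lemma~\ref{lem_procedure_1} to make $x(\cdot)$ and the guide's reconstruction $w_0 + h^{\alpha-1}(\Delta_h^{1-\alpha}y)(\cdot)$ close in the sup-norm, and invoke Lemma~\ref{lem_optimal_strategies} to bound $\gammaA$ by $\valh_h(t_\ast,w_\ast(\cdot)) + \zeta/2$, with the same assembly of parameters $h^{(4)}, \varepsilon^{(4)}, \delta^{(4)}$. The only (harmless) deviation is technical: the paper places \emph{both} $x(\cdot)$ and the reconstructed guide trajectory into a single compact set $D$ by separately bounding the divided fractional differences of $y(\cdot)$, whereas you use only the compact set of original motions together with the standard covering argument showing that a continuous $\sigma$ is uniformly continuous near a compact set with respect to an \emph{arbitrary} nearby comparison function; both versions are valid.
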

\begin{proof}
    Applying \cite[Proposition~7]{Gomoyunov_2018}, by the constant $L_1$ from Proposition~\ref{prop_L}, one can choose $R_2 > 0$ and $H_2 > 0$ such that the inequalities
    \begin{equation*}
        \begin{array}{l}
            \|h^{\alpha - 1} (\Delta_h^{1 - \alpha} y)(t)\| \leq R_2, 
            \\[0.5em]
            \|h^{\alpha - 1} (\Delta_h^{1 - \alpha} y)(t) - h^{\alpha - 1} (\Delta_h^{1 - \alpha} y)(t^\prime)\|
            \leq H_2 |t - t^\prime|^\alpha, \quad t, t^\prime \in [t_0, \vartheta],
        \end{array}
    \end{equation*}
    are valid for any $h > 0$ and any $y(\cdot) \in \Lip^0_{L_1}([t_0, \vartheta], \mathbb{R}^n).$
    Taking the constants $R_1$ and $H_1$ from Corollary~\ref{cor_motion_properties}, we define $R_3 = \max\{R_1, R_0 + R_2\},$ $H_3 = \max\{H_1, H_2\},$ and consider the compact set $D \subset \C([t_0, \vartheta], \mathbb{R}^n)$ consisting of the functions $x(\cdot)$ such that
    \begin{equation*}
        \|x(t)\| \leq R_3, \quad \|x(t) - x(t^\prime)\| \leq H_3 |t - t^\prime|^\alpha, \quad t, t^\prime \in [t_0, \vartheta].
    \end{equation*}
    Let $\zeta > 0$ be fixed.
    Due to ($A.5$), there exists $\xi = \xi(\zeta) > 0$ such that, for any $x(\cdot), x^\prime(\cdot) \in D,$ from the inequality $\|x(\cdot) - x^\prime(\cdot)\|_\infty \leq \xi$ it follows that $|\sigma(x(\cdot)) - \sigma(x^\prime(\cdot))| \leq \zeta / 2.$
    Let us choose $h^{(2)}(\xi) > 0$ and $\delta^{(2)}(\xi) > 0$ by Lemma~\ref{lem_procedure_1}, and put $h^{(4)} = h^{(2)}(\xi).$
    Finally, for any $h \in (0, h^{(4)}],$ we take $\varepsilon^{(1)} (h, \zeta/2) > 0$ and $\delta^{(1)}(\varepsilon, h, \zeta/2) > 0,$ $\varepsilon \in (0, \varepsilon^{(1)}(h, \zeta/2)],$ from Lemma~\ref{lem_optimal_strategies}, and define
    \begin{equation*}
        \begin{array}{l}
            \varepsilon^{(4)}(h) = \varepsilon^{(1)} (h, \zeta/2), \\[0.5em]
            \delta^{(4)}(\varepsilon, h) = \min \big\{\delta^{(1)}(\varepsilon, h, \zeta/2), \delta^{(2)}(\xi) \big\},
            \quad \varepsilon \in (0, \varepsilon^{(4)}(h)].
        \end{array}
    \end{equation*}
    Let us show that the statement of the lemma is valid for the chosen parameters.

    Let $(t_\ast, w_\ast(\cdot)) \in \G,$ $h \in (0, h^{(4)}],$ $\varepsilon \in (0, \varepsilon^{(4)}(h)],$ and let $\Delta$ be a partition (\ref{Delta}) with the diameter $\diam(\Delta) \leq \delta^{(4)}(\varepsilon, h).$
    Let us consider the motion $x(\cdot)$ of system (\ref{system}) generated from the initial position $(t_\ast, w_\ast(\cdot))$ by the first player's control procedure with a guide $\UPROC = \UPROC(t_\ast, w_\ast(\cdot), h, \varepsilon, \Delta)$ and a second player's control realization $v(\cdot) \in \mathcal{V}(t_\ast, \vartheta).$
    Let us consider the corresponding first player's control realization $u(\cdot) \in \mathcal{U}(t_\ast, \vartheta)$ in the original system and players' control realizations $p(\cdot) \in \mathcal{U}(t_\ast, \vartheta)$ and $q(\cdot) \in \mathcal{V}(t_\ast, \vartheta)$ in the approximating system (\ref{system_y}) for $w_0 = w_\ast(t_0),$ the fixed $h,$ and with the initial position $(t_\ast, r_\ast(\cdot))$ defined by (\ref{r_ast}).
    Let $y(\cdot)$ be the corresponding motion of the approximating system.
    By the definition of $U,$ the motion $y(\cdot)$ is generated by the control law $\{\SPO, \varepsilon, \Delta\}$ on the basis of the first player's optimal positional strategy $\SPO.$
    Hence, for the auxiliary function $x^\prime(t) = w_0 + h^{\alpha - 1} (\Delta^{1 - \alpha}_h y)(t),$ $t \in [t_0, \vartheta],$ due to the choice of $\varepsilon$ and $\Delta,$ we obtain
    \begin{equation*}
        \sigma(x^\prime(\cdot)) = \sigmaA(y(\cdot)) = \gammaA \leq \valA(t_\ast, r_\ast(\cdot)) + \zeta/2
        = \valh_h(t_\ast, w_\ast(\cdot)) + \zeta/2.
    \end{equation*}
    Moreover, the control realizations $u(\cdot)$ and $q(\cdot)$ are formed according to the mutual aiming procedure (\ref{procedure_1}), (\ref{s_j_1}).
    Therefore, according to the choice of $h$ and $\Delta,$ we derive $\|x(\cdot) - x^\prime(\cdot)\|_\infty \leq \xi.$
    Thus, taking into account the inclusions $x(\cdot), x^\prime(\cdot) \in D,$ by the choice of $\xi,$ we have
    \begin{equation*}
        \gamma = \sigma(x(\cdot)) \leq \sigma(x^\prime(\cdot)) + \zeta/2 \leq \valh_h(t_\ast, w_\ast(\cdot)) + \zeta.
    \end{equation*}
    The lemma is proved. \hfill $\square$
\end{proof}

\subsection{Second Player's Control Procedure with a Guide}
\label{subsec_Control_procedure_2}

Similarly to Sect.~\ref{subsec_Control_procedure_1}, we propose the following second player's control procedure with a guide in the original differential game (\ref{system}), (\ref{quality_index}).
Let $(t_\ast, w_\ast(\cdot)) \in \G,$ $h > 0,$ $\varepsilon > 0,$ and a partition $\Delta$ (\ref{Delta}) be fixed.
Let us consider the approximating differential game (\ref{system_y}), (\ref{quality_index_y}) for $w_0 = w_\ast(t_0),$ the fixed $h,$ and with the initial position $(t_\ast, r_\ast(\cdot))$ defined by (\ref{r_ast}).
By the steps of the partition $\Delta,$ the second player forms a control realization $v(\cdot) \in \mathcal{V}(t_\ast, \vartheta)$ in the original system and, at the same time, control realizations $p(\cdot) \in \mathcal{U}(t_\ast, \vartheta)$ and $q(\cdot) \in \mathcal{V}(t_\ast, \vartheta)$ in the approximating system as follows:
$v(\cdot)$ and $p(\cdot)$ are formed according to the mutual aiming procedure (\ref{procedure_2}), (\ref{s_j_2}), and $q(\cdot)$ is formed by the control law $\{\SQO, \varepsilon, \Delta\}$ (see (\ref{control_law_2_ADG})) on the basis of the optimal strategy $\SQO$ taken from Lemma~\ref{lem_optimal_strategies}.
From the initial position $(t_\ast, w_\ast(\cdot)),$ the described control procedure together with $u(\cdot) \in \mathcal{U}(t_\ast, \vartheta)$ uniquely generate the motion $x(\cdot)$ of the original system and determine the value $\gamma$ of quality index (\ref{quality_index}).
In what follows, this control procedure with a guide is referred as $\VPROC(t_\ast, w_\ast(\cdot), h, \varepsilon, \Delta).$

By analogy with Lemma~\ref{lem_U_h_varepsilon}, on the basis of Lemma~\ref{lem_procedure_2}, the following result can be proved.
\begin{lemma} \label{lem_V_h_varepsilon}
    For any $\zeta > 0,$ there exist
    \begin{equation*}
        \begin{array}{l}
            h^{(5)} = h^{(5)}(\zeta) > 0, \\[0.5em]
            \varepsilon^{(5)}(h) = \varepsilon^{(5)}(h, \zeta) > 0, \quad h \in (0, h^{(5)}], \\[0.5em]
            \delta^{(5)}(\varepsilon, h) = \delta^{(5)}(\varepsilon, h, \zeta) > 0,
            \quad \varepsilon \in (0, \varepsilon^{(5)}(h)], \quad h \in (0, h^{(5)}],
        \end{array}
    \end{equation*}
    such that, for any $(t_\ast, w_\ast(\cdot)) \in \G,$ $h \in (0, h^{(5)}],$ $\varepsilon \in (0, \varepsilon^{(5)}(h)],$ and any partition $\Delta$ $(\ref{Delta})$ with the diameter $\diam(\Delta) \leq \delta^{(5)}(\varepsilon, h),$ the second player's control procedure with a guide $\VPROC(t_\ast, w_\ast(\cdot), h, \varepsilon, \Delta)$ guarantees for the value $\gamma$ of quality index $(\ref{quality_index})$ the inequality
    \begin{equation*}
        \gamma \geq \valh_h(t_\ast, w_\ast(\cdot)) - \zeta
    \end{equation*}
    for any control realization of the first player $u(\cdot) \in \mathcal{U}(t_\ast, \vartheta).$
\end{lemma}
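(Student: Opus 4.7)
The plan is to mirror the proof of Lemma~\ref{lem_U_h_varepsilon} step by step, swapping the roles of the minimizer and the maximizer throughout. The only essential difference is that the optimal positional strategy $\SQO$ of the second player now steers the guide to yield a \emph{lower} bound on $\sigmaA(y(\cdot))$, while the mutual aiming procedure (\ref{procedure_2}), (\ref{s_j_2}) (rather than (\ref{procedure_1}), (\ref{s_j_1})) is used to keep the original motion $x(\cdot)$ close to the auxiliary trajectory $x^\prime(t) = w_0 + h^{\alpha - 1}(\Delta_h^{1-\alpha} y)(t)$.

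First, I would reproduce the construction of the compact set $D \subset \C([t_0, \vartheta], \mathbb{R}^n)$ exactly as in the proof of Lemma~\ref{lem_U_h_varepsilon}: apply \cite[Proposition~7]{Gomoyunov_2018} to the constant $L_1$ from Proposition~\ref{prop_L} to obtain uniform bounds $R_2, H_2$ on $h^{\alpha-1}(\Delta_h^{1-\alpha} y)(\cdot)$ for $y(\cdot) \in \Lip_{L_1}^0([t_0, \vartheta], \mathbb{R}^n)$, combine with the constants $R_1, H_1$ from Corollary~\ref{cor_motion_properties} to set $R_3 = \max\{R_1, R_0 + R_2\}$ and $H_3 = \max\{H_1, H_2\}$, and let $D$ consist of those $x(\cdot)$ with $\|x(t)\| \leq R_3$ and $\|x(t) - x(t^\prime)\| \leq H_3 |t - t^\prime|^\alpha$. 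By the continuity of $\sigma$ (condition ($A.5$)), on the compact set $D$ one picks $\xi = \xi(\zeta) > 0$ so that $\|x(\cdot) - x^\prime(\cdot)\|_\infty \leq \xi$ implies $|\sigma(x(\cdot)) - \sigma(x^\prime(\cdot))| \leq \zeta/2$ for all $x(\cdot), x^\prime(\cdot) \in D$.

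Next, I invoke Lemma~\ref{lem_procedure_2} (in place of Lemma~\ref{lem_procedure_1}) to obtain $h^{(3)}(\xi) > 0$ and $\delta^{(3)}(\xi) > 0$, and set $h^{(5)} = h^{(3)}(\xi)$. For each $h \in (0, h^{(5)}]$, I take $\varepsilon^{(1)}(h, \zeta/2)$ and $\delta^{(1)}(\varepsilon, h, \zeta/2)$ from Lemma~\ref{lem_optimal_strategies} and define
\begin{equation*}
    \varepsilon^{(5)}(h) = \varepsilon^{(1)}(h, \zeta/2), \quad
    \delta^{(5)}(\varepsilon, h) = \min\{\delta^{(1)}(\varepsilon, h, \zeta/2), \delta^{(3)}(\xi)\}.
\end{equation*}
Now fix arbitrary $(t_\ast, w_\ast(\cdot)) \in \G$, $h, \varepsilon, \Delta$ within these bounds, and an arbitrary first player's realization $u(\cdot) \in \mathcal{U}(t_\ast, \vartheta)$. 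The procedure $\VPROC(t_\ast, w_\ast(\cdot), h, \varepsilon, \Delta)$ simultaneously generates $v(\cdot)$ in the original system, as well as $p(\cdot), q(\cdot)$ in the approximating system, with $v(\cdot)$ and $p(\cdot)$ formed by (\ref{procedure_2}), (\ref{s_j_2}) and $q(\cdot)$ formed by the control law $\{\SQO, \varepsilon, \Delta\}$.

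The conclusion then assembles via two estimates. Since $q(\cdot)$ is driven by the optimal positional strategy $\SQO$ of the maximizer, Lemma~\ref{lem_optimal_strategies} gives, for the auxiliary trajectory $x^\prime(t) = w_0 + h^{\alpha-1}(\Delta_h^{1-\alpha} y)(t)$,
\begin{equation*}
    \sigma(x^\prime(\cdot)) = \sigmaA(y(\cdot)) = \gammaA \geq \valA(t_\ast, r_\ast(\cdot)) - \zeta/2 = \valh_h(t_\ast, w_\ast(\cdot)) - \zeta/2.
\end{equation*}
Lemma~\ref{lem_procedure_2} together with $\diam(\Delta) \leq \delta^{(3)}(\xi)$ and $h \leq h^{(3)}(\xi)$ yields $\|x(\cdot) - x^\prime(\cdot)\|_\infty \leq \xi$; combined with the inclusions $x(\cdot), x^\prime(\cdot) \in D$ and the choice of $\xi$, this gives $\sigma(x(\cdot)) \geq \sigma(x^\prime(\cdot)) - \zeta/2$, so $\gamma \geq \valh_h(t_\ast, w_\ast(\cdot)) - \zeta$, as required. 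The only point requiring care is verifying that the auxiliary trajectory $x^\prime(\cdot)$ indeed lies in $D$ (it does, by the uniform bounds from \cite[Proposition~7]{Gomoyunov_2018} applied to the guide motion $y(\cdot) \in \Lip_{L_1}^0$), and that $\SQO$ still delivers its guarantee in the presence of the mutual-aiming coupling — this is automatic because Lemma~\ref{lem_optimal_strategies} gives the inequality against \emph{any} second-player-independent realization $p(\cdot)$, and the procedure feeds exactly such a $p(\cdot)$ into the guide. There is no genuine obstacle, only careful bookkeeping of parameter dependencies in $h, \varepsilon, \Delta$.
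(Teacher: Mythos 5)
Your proposal is correct and follows exactly the route the paper intends: the paper gives no separate proof of this lemma, stating only that it is obtained "by analogy with Lemma~\ref{lem_U_h_varepsilon}, on the basis of Lemma~\ref{lem_procedure_2}," and your argument carries out precisely that symmetric substitution (set $D$, uniform continuity of $\sigma$, $h^{(5)}=h^{(3)}(\xi)$, the guarantee of $\SQO$ from Lemma~\ref{lem_optimal_strategies} against the realized $p(\cdot)$). Your remark that the optimal strategy's guarantee applies because the mutual-aiming procedure simply produces an admissible realization $p(\cdot)\in\mathcal{U}(t_\ast,\vartheta)$ is exactly the point that makes the analogy go through.
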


\subsection{Limit of the Values of the Approximating Games}
\label{subsec_Limit}

Considering in the original differential game (\ref{system}), (\ref{quality_index}) the case when the both players use the described in Sect.~\ref{subsec_Control_procedure_1} and~\ref{subsec_Control_procedure_2} control procedures with a guide, we obtain the result below.
\begin{lemma} \label{lem_limit}
    For any initial position $(t_\ast, w_\ast(\cdot)) \in \G,$ the following limit exists:
    \begin{equation} \label{lem_limit_main}
        \lim_{h \downarrow 0} \valh_h (t_\ast, w_\ast(\cdot)) = \vall (t_\ast, w_\ast(\cdot)),
    \end{equation}
    where $\valh_h (t_\ast, w_\ast(\cdot))$ is defined by $(\ref{valh}).$
    Moreover, the convergence is uniform in $(t_\ast, w_\ast(\cdot)) \in \G.$
\end{lemma}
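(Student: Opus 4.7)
The plan is to exploit the two control procedures with a guide to derive a uniform Cauchy property for the family $\{\valh_h\}_{h > 0}$ as $h \downarrow 0$. The key observation is that Lemmas~\ref{lem_U_h_varepsilon} and~\ref{lem_V_h_varepsilon} give one-sided bounds on the quality index that hold against \emph{any} control realization of the opponent; so when the two players simultaneously apply the procedures $\UPROC$ and $\VPROC$ with possibly different parameters $h_1$ and $h_2$, both bounds apply to the same motion, and subtracting them will sandwich the difference $|\valh_{h_1} - \valh_{h_2}|$.

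More precisely, I would fix $\zeta > 0$, invoke Lemmas~\ref{lem_U_h_varepsilon} and~\ref{lem_V_h_varepsilon} with accuracy $\zeta$ to obtain thresholds $h^{(4)}(\zeta)$ and $h^{(5)}(\zeta)$, and set $h^\circ(\zeta) = \min\{h^{(4)}(\zeta), h^{(5)}(\zeta)\}$. Then, for any $h_1, h_2 \in (0, h^\circ(\zeta)]$, I would choose $\varepsilon_i \in (0, \varepsilon^{(4)}(h_i)] \cap (0, \varepsilon^{(5)}(h_i)]$ (taking the relevant one for each player) and a common partition $\Delta$ with diameter below $\min\{\delta^{(4)}(\varepsilon_1, h_1), \delta^{(5)}(\varepsilon_2, h_2)\}$. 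For an arbitrary initial position $(t_\ast, w_\ast(\cdot)) \in \G$, let the first player apply the procedure $\UPROC(t_\ast, w_\ast(\cdot), h_1, \varepsilon_1, \Delta)$ and the second apply $\VPROC(t_\ast, w_\ast(\cdot), h_2, \varepsilon_2, \Delta)$. Since these are step-by-step feedback rules acting on a common partition, they simultaneously and unambiguously produce control realizations $u(\cdot)$ and $v(\cdot)$ in the original system, hence a unique motion $x(\cdot)$ and a unique value $\gamma$ of the quality index.

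Applying Lemma~\ref{lem_U_h_varepsilon}, with $v(\cdot)$ playing the role of the second player's arbitrary realization, yields $\gamma \leq \valh_{h_1}(t_\ast, w_\ast(\cdot)) + \zeta$. Symmetrically, Lemma~\ref{lem_V_h_varepsilon}, with $u(\cdot)$ playing the role of the first player's arbitrary realization, yields $\gamma \geq \valh_{h_2}(t_\ast, w_\ast(\cdot)) - \zeta$. Combining these two inequalities gives $\valh_{h_2}(t_\ast, w_\ast(\cdot)) - \valh_{h_1}(t_\ast, w_\ast(\cdot)) \leq 2\zeta$; swapping the roles of $h_1$ and $h_2$ (i.e., reversing which procedure each player uses) provides the reverse estimate, so
\begin{equation*}
    \bigl|\valh_{h_1}(t_\ast, w_\ast(\cdot)) - \valh_{h_2}(t_\ast, w_\ast(\cdot))\bigr| \leq 2\zeta
    \quad \text{for all } h_1, h_2 \in (0, h^\circ(\zeta)],
\end{equation*}
and this estimate is uniform in $(t_\ast, w_\ast(\cdot)) \in \G$. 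Thus the family $\{\valh_h\}_{h>0}$ is uniformly Cauchy on $\G$ as $h \downarrow 0$, so it converges uniformly to some function $\vall$ on $\G$, which is the statement of the lemma.

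The only subtle point I expect is the verification that the two control procedures, when run against each other, are compatible — that is, they indeed produce a single well-defined pair $(u(\cdot), v(\cdot))$ to which both Lemmas~\ref{lem_U_h_varepsilon} and~\ref{lem_V_h_varepsilon} can be applied with the same motion $x(\cdot)$ and the same $\gamma$. This is clear from the step-by-step structure: on each step $[\tau_j, \tau_{j+1})$ each procedure uses only the position $(\tau_j, x_{\tau_j}(\cdot))$ and its own internal guide, so the two actions are computed independently and simultaneously, and the resulting realization of the opponent is precisely the "arbitrary" realization to which each lemma's guarantee applies. Once this is observed, the rest reduces to the two one-sided estimates and the sandwich argument above.
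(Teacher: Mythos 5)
Your proposal is correct and follows essentially the same route as the paper: both run the first player's guide procedure with parameter $h_1$ against the second player's guide procedure with parameter $h_2$ on a common partition, apply Lemmas~\ref{lem_U_h_varepsilon} and~\ref{lem_V_h_varepsilon} to the single resulting motion to sandwich $\gamma$, and conclude a uniform Cauchy property for $\{\valh_h\}$ as $h \downarrow 0$. The only differences are cosmetic (the paper calls the lemmas with accuracy $\zeta/2$ and uses a single $\varepsilon = \min\{\varepsilon^{(4)}(h_1), \varepsilon^{(5)}(h_2)\}$, while you keep separate $\varepsilon_i$ and end with $2\zeta$), and your explicit check that the two step-by-step procedures are compatible when run simultaneously is a point the paper leaves implicit.
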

\begin{proof}
    By the Cauchy criterion, to prove the lemma, it is sufficient to show that, for any $\zeta > 0,$ there exists $h = h(\zeta) > 0$ such that, for any $h_1, h_2 \in (0, h]$ and any $(t_\ast, w_\ast(\cdot)) \in \G,$ the inequality below is valid:
    \begin{equation} \label{lem_limit_proof}
        \valh_{h_2} (t_\ast, w_\ast(\cdot)) \leq \valh_{h_1} (t_\ast, w_\ast(\cdot)) + \zeta.
    \end{equation}
    Let $\zeta > 0$ be fixed. By Lemmas~\ref{lem_U_h_varepsilon} and~\ref{lem_V_h_varepsilon}, for $i \in \{4, 5\},$ let us choose
    \begin{equation} \label{h_i}
        \begin{array}{l}
            h^{(i)} = h^{(i)}(\zeta/2) > 0, \\[0.5em]
            \varepsilon^{(i)}(h) = \varepsilon^{(i)}(h, \zeta/2) > 0, \quad h \in (0, h^{(i)}], \\[0.5em]
            \delta^{(i)}(\varepsilon, h) = \delta^{(i)}(\varepsilon, h, \zeta/2) > 0,
            \quad \varepsilon \in (0, \varepsilon^{(i)}(h)], \quad  h \in (0, h^{(i)}],
        \end{array}
    \end{equation}
    and put $h = \min \{h^{(4)}, h^{(5)}\}.$
    Let $h_1, h_2 \in (0, h].$
    We define
    \begin{equation*}
        \varepsilon = \min \big\{\varepsilon^{(4)}(h_1), \, \varepsilon^{(5)}(h_2) \big\},
        \quad \delta = \min \big\{\delta^{(4)}(\varepsilon, h_1), \, \delta^{(5)}(\varepsilon, h_2)\big\}.
    \end{equation*}
    Let $(t_\ast, w_\ast(\cdot)) \in \G,$ and $\Delta$ be a partition (\ref{Delta}) with the diameter $\diam(\Delta) \leq \delta.$ Let us consider the motion $x(\cdot)$ of system (\ref{system}) generated by the players' control procedures with a guide $\UPROC(t_\ast, w_\ast(\cdot), h_1, \varepsilon, \Delta)$ and $\VPROC(t_\ast, w_\ast(\cdot), h_2, \varepsilon, \Delta).$
    Then, for the realized value $\gamma = \sigma(x(\cdot))$ of quality index (\ref{quality_index}), due to the choice of $h_1,$ $h_2,$ $\varepsilon$ and $\Delta,$ we have
    \begin{equation*}
        \valh_{h_2}(t_\ast, w_\ast(\cdot)) - \zeta/2 \leq \gamma \leq \valh_{h_1}(t_\ast, w_\ast(\cdot)) + \zeta/2,
    \end{equation*}
    wherefrom we derive (\ref{lem_limit_proof}).
    The lemma is proved. \hfill $\square$
\end{proof}

\section{Value of the Game}
\label{sec_Proof}

The main result of the paper is the following.
\begin{theorem} \label{thm_Existence}
    Let conditions $(A.1)$--$(A.5)$ be satisfied.
    Then:
    \begin{enumerate}
      \item The differential game $(\ref{system}),$ $(\ref{initial_condition})$ has the value $\Val(t_\ast, w_\ast(\cdot)),$ $(t_\ast, w_\ast(\cdot)) \in \G.$
      \item This value coincides with the limit $\vall(t_\ast, w_\ast(\cdot))$ $($see $(\ref{lem_limit_main}))$ of the values of the approximating differential games $(\ref{system_y}),$ $(\ref{quality_index_y}).$
      \item For any $\zeta > 0,$ there exist
        \begin{equation*}
            \begin{array}{l}
                h_\ast = h_\ast(\zeta) > 0, \\[0.5em]
                \varepsilon_\ast(h) = \varepsilon_\ast(h, \zeta) > 0, \quad h \in (0, h_\ast], \\[0.5em]
                \delta_\ast(\varepsilon, h) = \delta_\ast(\varepsilon, h, \zeta) > 0,
                \quad \varepsilon \in (0, \varepsilon_\ast(h)], \quad h \in (0, h_\ast],
            \end{array}
        \end{equation*}
        such that the following statement holds.
        Let $(t_\ast, w_\ast(\cdot)) \in \G,$ $h \in (0, h_\ast],$ $\varepsilon \in (0, \varepsilon_\ast(h)],$ and let $\Delta$ be a partition $(\ref{Delta})$ with the diameter $\diam(\Delta) \leq \delta_\ast(\varepsilon, h).$
        Then the control procedure with a guide of the first player $\UPROC(t_\ast, w_\ast(\cdot), h, \varepsilon, \Delta)$ guarantees for the value $\gamma$ of quality index $(\ref{quality_index})$ the inequality
        \begin{equation} \label{gamma_leq_Gamma_zeta}
            \gamma \leq \Val(t_\ast, w_\ast(\cdot)) + \zeta
        \end{equation}
        for any control realization of the second player $u(\cdot) \in \mathcal{V}(t_\ast, \vartheta);$
        and the control procedure with a guide of the second player $\VPROC(t_\ast, w_\ast(\cdot), h, \varepsilon, \Delta)$ guarantees for the value $\gamma$ of quality index $(\ref{quality_index})$ the inequality
        \begin{equation} \label{gamma_geq_Gamma_zeta}
            \gamma \geq \Val(t_\ast, w_\ast(\cdot)) - \zeta
        \end{equation}
        for any control realization of the first player $u(\cdot) \in \mathcal{U}(t_\ast, \vartheta).$
    \end{enumerate}
\end{theorem}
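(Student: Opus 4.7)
The plan is to establish items~(i), (ii), and~(iii) simultaneously by sandwiching both the upper and the lower values of the game between $\vall(t_\ast, w_\ast(\cdot)) \pm \zeta$ for arbitrary $\zeta > 0$ and then letting $\zeta \downarrow 0$. The standard inequality $\valQSS(t_\ast, w_\ast(\cdot)) \leq \valQSF(t_\ast, w_\ast(\cdot))$, which holds for zero-sum games formulated with non-anticipative strategies (see, e.g., \cite[Ch.~VIII]{Bardi_Capuzzo-Dolcetta_1997} and the references therein), will serve as the complementary direction.

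The key observation is that each of the control procedures with a guide induces a non-anticipative strategy in the sense of Sect.~\ref{subsec_NAS}: for a fixed partition $\Delta$, the step-by-step feedback rules defining $\UPROC(t_\ast, w_\ast(\cdot), h, \varepsilon, \Delta)$ and $\VPROC(t_\ast, w_\ast(\cdot), h, \varepsilon, \Delta)$ produce, from any opponent's realization, a unique own realization, and on each subinterval $[\tau_j, \tau_{j+1})$ the output depends only on the opponent's control on $[t_\ast, \tau_j)$ through the quantities $x(\tau_j)$ and $y_{\tau_j}(\cdot)$. Applying Lemmas~\ref{lem_U_h_varepsilon} and~\ref{lem_V_h_varepsilon}, for any $\zeta > 0$ and sufficiently small $h$, $\varepsilon$, and $\diam(\Delta)$, I shall obtain
\begin{equation*}
    \valQSF(t_\ast, w_\ast(\cdot)) \leq \valh_h(t_\ast, w_\ast(\cdot)) + \zeta,
    \qquad
    \valQSS(t_\ast, w_\ast(\cdot)) \geq \valh_h(t_\ast, w_\ast(\cdot)) - \zeta.
\end{equation*}
Passing to the limit $h \downarrow 0$ via Lemma~\ref{lem_limit} and then sending $\zeta \downarrow 0$ yields $\valQSF(t_\ast, w_\ast(\cdot)) \leq \vall(t_\ast, w_\ast(\cdot)) \leq \valQSS(t_\ast, w_\ast(\cdot))$; combined with the standard inequality above, this gives items~(i) and~(ii). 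For item~(iii), the quantitative parameter choices are assembled from those of the three lemmas: after picking $h^\circ(\zeta) > 0$ via Lemma~\ref{lem_limit} so that $|\valh_h(t_\ast, w_\ast(\cdot)) - \vall(t_\ast, w_\ast(\cdot))| \leq \zeta/2$ uniformly on $\G$, I will set $h_\ast(\zeta) = \min\{h^\circ(\zeta), h^{(4)}(\zeta/2), h^{(5)}(\zeta/2)\}$ and define $\varepsilon_\ast$ and $\delta_\ast$ as the corresponding minima of the parameters supplied by Lemmas~\ref{lem_U_h_varepsilon} and~\ref{lem_V_h_varepsilon}; the bounds~(\ref{gamma_leq_Gamma_zeta}) and~(\ref{gamma_geq_Gamma_zeta}) then follow directly.

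The main anticipated obstacle lies not in this final sandwich argument, which is essentially a routine accounting exercise, but in ensuring that the bounds furnished by the three cited lemmas are uniform across all initial positions $(t_\ast, w_\ast(\cdot)) \in \G$, so that a single choice of $h$ works for every starting position. This uniformity is secured by the position-independent constants of Propositions~\ref{prop_G_properties} and~\ref{prop_L}, which underlie the derivations of Lemmas~\ref{lem_U_h_varepsilon},~\ref{lem_V_h_varepsilon}, and~\ref{lem_limit}; with that uniformity already provided, the theorem reduces to the limit-and-sandwich reasoning outlined above.
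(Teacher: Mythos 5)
Your first half reproduces the paper's argument: the step-by-step procedures $\UPROC$ and $\VPROC$ induce non-anticipative strategies of the respective players, so Lemmas~\ref{lem_U_h_varepsilon}, \ref{lem_V_h_varepsilon} and~\ref{lem_limit} yield $\valQSF(t_\ast, w_\ast(\cdot)) \leq \vall(t_\ast, w_\ast(\cdot)) \leq \valQSS(t_\ast, w_\ast(\cdot))$, and your parameter bookkeeping for item~3 matches the paper's. The gap is the step you dismiss as ``standard'': the inequality $\valQSS \leq \valQSF$ is not elementary for non-anticipative strategies as defined in Sect.~\ref{subsec_NAS}. A direct proof of $\sup_\beta \inf_u \gamma \leq \inf_\alpha \sup_v \gamma$ would require, for arbitrary non-anticipative $\alpha$ and $\beta$, a consistent pair $u(\cdot) = \alpha(v(\cdot))$, $v(\cdot) = \beta(u(\cdot))$; since the strategies here are non-anticipative \emph{without delay} (the output on $[t_\ast, t^\ast]$ may depend on the input on all of $[t_\ast, t^\ast]$, including the present instant), such a fixed point need not exist (compare a ``copy'' strategy against an ``anti-copy'' strategy). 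In the classical ODE setting the comparison of the two Elliott--Kalton values is obtained from the viscosity-solution comparison principle for the Isaacs equations (cf.\ \cite[Ch.~VIII]{Bardi_Capuzzo-Dolcetta_1997}), machinery that is exactly what is unavailable for the neutral-type/fractional game --- its absence is the reason the paper exists. So this inequality cannot simply be cited, and without it your sandwich does not close.

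The paper closes the loop differently, and you need this (or an equivalent) second half. Arguing by contradiction, suppose $\valQSF(t_\ast, w_\ast(\cdot)) + \zeta^\ast = \vall(t_\ast, w_\ast(\cdot))$ for some $\zeta^\ast > 0$, and take a non-anticipative strategy $\alpha^\ast$ of the first player guaranteeing $\gamma \leq \valQSF(t_\ast, w_\ast(\cdot)) + \zeta^\ast/3$. Because $\VPROC$ is a feedback law that is constant on each partition step and uses only information available at the left endpoint of the step, it \emph{can} be consistently composed with the non-anticipative $\alpha^\ast$: the pair $(u(\cdot), v(\cdot))$ with $u(\cdot) = \alpha^\ast(v(\cdot))$ and $v(\cdot)$ produced by $\VPROC$ is built step by step, giving a motion $x^\ast(\cdot)$ with $\vall(t_\ast, w_\ast(\cdot)) - \zeta^\ast/3 \leq \sigma(x^\ast(\cdot)) \leq \valQSF(t_\ast, w_\ast(\cdot)) + \zeta^\ast/3$, a contradiction. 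Hence $\valQSF = \vall$, and symmetrically $\valQSS = \vall$, with no appeal to an a priori ordering of the two values. Note that this composition succeeds only because one of the two objects is a step-by-step control law rather than a general non-anticipative strategy --- which is precisely why the shortcut you propose is not available.
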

\begin{proof}
    Let $\zeta > 0$ be fixed.
    Let us define
    \begin{equation*}
        \begin{array}{l}
            h_\ast = \min \big\{h^{(4)}, h^{(5)}, h^{(6)}\big\}, \\[0.5em]
            \varepsilon_\ast(h) = \min \big\{\varepsilon^{(4)}(h), \varepsilon^{(5)}(h)\big\}, \quad h \in (0, h_\ast], \\[0.5em]
            \delta_\ast(\varepsilon, h) = \min \big\{\delta^{(4)}(\varepsilon, h), \delta^{(5)}(\varepsilon, h)\big\},
            \quad \varepsilon \in (0, \varepsilon_\ast], \quad h \in (0, h_\ast],
        \end{array}
    \end{equation*}
    where $h^{(i)} > 0,$ $\varepsilon^{(i)}(h) > 0$ and $\delta^{(i)}(\varepsilon, h) > 0$ for $i \in \{4, 5\}$ are chosen as in (\ref{h_i}), and $h^{(6)} = h^{(6)}(\zeta/2) > 0$ is chosen according to Lemma~\ref{lem_limit} such that, for any $h \in (0, h^{(6)}]$ and any $(t_\ast, w_\ast(\cdot)) \in \G,$ the inequality below is valid:
    \begin{equation*}
        |\vall(t_\ast, w_\ast(\cdot)) - \valh_h(t_\ast, w_\ast(\cdot))| \leq \zeta/2.
    \end{equation*}
    Let $(t_\ast, w_\ast(\cdot)) \in \G,$ $h \in (0, h_\ast],$ $\varepsilon \in (0, \varepsilon_\ast(h)],$ and let $\Delta$ be a partition $(\ref{Delta})$ with the diameter $\diam(\Delta) \leq \delta_\ast(\varepsilon, h).$
    Let us consider the first player's control procedure with a guide $\UPROC = \UPROC(t_\ast, w_\ast(\cdot), h, \varepsilon, \Delta).$
    On the basis of this procedure, we define the first player's non-anticipative strategy $\alpha$ (see Sect.~\ref{subsec_NAS}) as follows.
    For any $v(\cdot) \in \mathcal{V}(t_\ast, \vartheta),$ we consider the unique motion $x(\cdot)$ of system (\ref{system}) and the control realization $u(\cdot)$ that are formed by $\UPROC$ and $v(\cdot),$ and put $\alpha(v(\cdot)) = u(\cdot).$
    Further, since, by the choice of $h,$ $\varepsilon$ and $\Delta,$ for any $v(\cdot) \in \mathcal{V}(t_\ast, \vartheta),$ the corresponding value $\gamma = \sigma(x(\cdot))$ of quality index (\ref{quality_index}) satisfies the inequality
    \begin{equation} \label{thm_Existence_proof_1}
        \gamma \leq \valh_h(t_\ast, w_\ast(\cdot)) + \zeta/2 \leq \vall(t_\ast, w_\ast(\cdot)) + \zeta,
    \end{equation}
    then, by definition (\ref{lower_value}) of the lower game value $\valQSF(t_\ast, w_\ast(\cdot)),$ we obtain
    \begin{equation*}
        \valQSF(t_\ast, w_\ast(\cdot)) \leq \vall(t_\ast, w_\ast(\cdot)) + \zeta.
    \end{equation*}
    Taking into account that this inequality is valid for any $\zeta > 0,$ we derive
    \begin{equation*}
        \valQSF(t_\ast, w_\ast(\cdot)) \leq \vall(t_\ast, w_\ast(\cdot)).
    \end{equation*}

    Now, arguing by contradiction, let us suppose that
    \begin{equation} \label{thm_quasi-strategies_proof}
        \valQSF(t_\ast, w_\ast(\cdot)) + \zeta^\ast = \vall(t_\ast, w_\ast(\cdot))
    \end{equation}
    for a number $\zeta^\ast > 0.$
    Let $\alpha^\ast$ be a first player's non-anticipative strategy such that, for any $v(\cdot) \in \mathcal{V}(t_\ast, \vartheta),$ the motion $x(\cdot)$ of system (\ref{system}) generated from $(t_\ast, w_\ast(\cdot))$ by $v(\cdot)$ and $u(\cdot) = \alpha^\ast(v(\cdot))$ satisfies the inequality
    \begin{equation*}
        \gamma = \sigma(x(\cdot)) \leq \valQSF(t_\ast, w_\ast(\cdot)) + \zeta^\ast/3.
    \end{equation*}
    Similarly to above, based on Lemmas~\ref{lem_V_h_varepsilon} and~\ref{lem_limit}, by the number $\zeta^\ast/3,$ one can choose $h^\ast > 0,$ $\varepsilon^\ast > 0,$ and a partition $\Delta^\ast$ (\ref{Delta}) such that the motion $x(\cdot)$ of system (\ref{system}) generated from $(t_\ast, w_\ast(\cdot))$ by the second player's control procedure with a guide $\VPROC = \VPROC(t_\ast, w_\ast(\cdot), h^\ast, \varepsilon^\ast, \Delta^\ast)$ and a first player's control realization $u(\cdot) \in \mathcal{U}(t_\ast, \vartheta)$ satisfies the inequality
    \begin{equation*}
        \gamma = \sigma(x(\cdot)) \geq \vall(t_\ast, w_\ast(\cdot)) - \zeta^\ast/3.
    \end{equation*}
    According to the definition (see Sect~\ref{subsec_Control_procedure_2}), the control procedure $\VPROC$ forms $v(\cdot)$ by the steps of the partition $\Delta^\ast$ on the basis of the information about the realized values of the state vectors of the original and approximating systems.
    Therefore, since $\alpha^\ast$ is non-anticipative, one can consider the motion $x^\ast(\cdot)$ generated by $u(\cdot) \in \mathcal{U}(t_\ast, \vartheta)$ and $v(\cdot) \in \mathcal{V}(t_\ast, \vartheta)$ such that $u(\cdot) = \alpha^\ast(v(\cdot)),$ and, at the same time, $v(\cdot)$ is formed by $\VPROC.$
    For this motion $x^\ast(\cdot),$ we have
    \begin{equation*}
        \vall(t_\ast, w_\ast(\cdot)) - \zeta^\ast/3 \leq \sigma(x^\ast(\cdot)) \leq \valQSF(t_\ast, w_\ast(\cdot)) + \zeta^\ast/3,
    \end{equation*}
    wherefrom we obtain
    \begin{equation*}
        \vall(t_\ast, w_\ast(\cdot)) \leq \valQSF(t_\ast, w_\ast(\cdot)) + 2 \zeta^\ast/3.
    \end{equation*}
    The obtained inequality contradicts (\ref{thm_quasi-strategies_proof}) since $\zeta^\ast > 0.$
    Hence, we derive
    \begin{equation} \label{thm_Existence_proof_2}
        \valQSF(t_\ast, w_\ast(\cdot)) = \vall(t_\ast, w_\ast(\cdot)).
    \end{equation}
    The validity of the equality $\valQSS(t_\ast, w_\ast(\cdot)) = \vall(t_\ast, w_\ast(\cdot))$ can be established in a similar way.
    Thus, the first and second parts of the theorem are proved.
    Inequality (\ref{gamma_leq_Gamma_zeta}) in the third part of the theorem follows directly from (\ref{thm_Existence_proof_1}) and (\ref{thm_Existence_proof_2}).
    The validity of inequality (\ref{gamma_geq_Gamma_zeta}) can be shown similarly.
    The theorem is proved. \hfill $\square$
\end{proof}

\begin{remark}
    Let us note that, following \cite[\S~8.2]{Krasovskii_Subbotin_1988} (see also \cite{Lukoyanov_Plaksin_2015} for details), one can consider another formalization of the differential game (\ref{system}), (\ref{quality_index}).
    Namely, one can formally describe a sufficiently wide classes of players' strategies with a guide and introduce the corresponding values of the players' optimal guaranteed results.
    One can show that from Theorem~\ref{thm_Existence} it follows that these optimal guaranteed results coincide, i.e., the differential game has the value in the classes of strategies with a guide, and this value is equal to $\Val(t_\ast,w_\ast(\cdot)).$
    Moreover, the players' strategies with a guide that guarantee inequalities (\ref{gamma_leq_Gamma_zeta}) and (\ref{gamma_geq_Gamma_zeta}) can be constructed on the basis of the proposed in Sects.~\ref{subsec_Control_procedure_1} and~\ref{subsec_Control_procedure_2} control procedures.
    In this sense, these control procedures with a guide can be called optimal.
\end{remark}

\begin{remark}
    In addition to Remark~\ref{rem_methods}, another possible way of solving the approximating differential game (\ref{system_y}), (\ref{quality_index_y}) is to approximate functional differential equation of a retarded type (\ref{system_y}) by a high-dimensional system of ordinary differential equations (see, e.g., \cite{Lukoyanov_Plaksin_2015_2} and the references therein).
    Note that this approach can also be used for proving the existence of the game value and constructing the players' optimal control procedures with a guide in the original differential game (\ref{system}), (\ref{quality_index}).
\end{remark}

\section{Conclusion}
\label{sec_conclusion}

In the paper, we have considered a zero-sum differential game in a dynamical system which motion is described by a fractional differential equation.
We have proved that the lower and upper game values coincide, i.e., the differential game has the value.
The proof is based on the appropriate approximation of the game by a differential game in a dynamical system which motion is described by a first order functional differential equation of a retarded type.
This approach has also allowed us to propose the optimal players' feedback control procedures with a guide, which can be effectively applied if the optimal in the approximating game players' positional strategies are found.

\end{document}